\documentclass[12pt,twoside,reqno]{article}
 \usepackage[T1]{fontenc}
\usepackage{latexsym}
\usepackage{amssymb,amsmath,amsthm,bm,setspace}
\usepackage{pdfsync}
\usepackage{hyperref}
\usepackage{mathrsfs,bm,amssymb,color}
\usepackage{color}
\usepackage{graphicx}

\newcommand{\DEL}{\Delta}

\newcommand{\RR}{\mathbb{R}}
\newcommand{\BR}{\mathbb{R}^{n}}
\newcommand{\CC}{\mathbb{C}}

\newcommand{\lka}{\langle}
\newcommand{\rka}{\rangle}

%\rm s}}

  \newtheorem{theorem}{Theorem}[section]
\newtheorem{definition}[theorem]{Definition}

  \newtheorem{lemma}[theorem]{Lemma}

%%%%%%%%%%%%%%%%%%%%%%%%%%%%%%%%%%%%%%%%%%%%%%%%%%%%%%%%%%%%%%%%%%%%%%%
%%%%%%%%%%%%%%%%%%%%%%%%%%%%%%%%%%%%%%%%%%%%%%%%%%%%%%%%%%%%%%%%

%%%%%%%%%%%%%%%%%%%%%%%%%%%%%%%%%%%%%%%%%%%%%%%%%%%%%%%%%%%%%%%%%%%%%%%

\setcounter{page}{1}
\setlength{\textheight}{21.6cm}
\setlength{\textwidth}{16cm}
\setlength{\oddsidemargin}{1cm}
\setlength{\evensidemargin}{1cm}
%%%%%%%%%%%%%%%%%%%%%%%%%%%%%%%%%%%%%%%%%%%%%%%%%%%%%%%%%%%%%%%%%%%%%%%%%
\pagestyle{myheadings}
\thispagestyle{empty}
%%%%%%%%%%%%%%%%%%%%%%%%%%

%\renewcommand{\C}{\mathrm{C}}

%\newcommand {\K}{\mathrm{K}}

%\newcommand{\e}{\text{\rm e}}
\renewcommand{\Re}{\text{\rm Re}\,}

%%%%%%%%%%%%%%%%%%%%%%%%%%%%%%%%%%%%%%%%
\tolerance=1
\emergencystretch=\maxdimen
\hyphenpenalty=10000
\hbadness=10000

\topmargin=0.00cm

%%%%%%%%%%%%%%%%%%%%%%%%%%%%%%%%%%%%%%%%

%\newtheorem{proposition}[stat]{Proposition}
%\newtheorem{corollary}[stat]{Corollary}
%\newtheorem{assumption}[stat]{Assumption}
%\newtheorem{theorem}[stat]{Theorem}

%\newtheorem{lemma}[stat]{Lemma}
\theoremstyle{definition}

\numberwithin{equation}{section}

%%%%%%%%%%%%%%%%%%%%%%%%%%%%%%%%%%%%%%%%%%%%%%%%%%%%%%%%%%%

%%%%%%%%%%%%%%%%%%%%%%%%%%%%%%%%%%%%%%%%%%%%%%%%%%%%%%%%%%%%%%%%%%%%%

\title{Gegenbauer kernel filtration on  the unit hypersphere}
\markboth{L. O. Omenyi, M. E. Omaba}{\small{Gegenbauer kernel filtration on  
the unit hypersphere}}

\begin{document}%\onehalfspacing
\author{Louis Omenyi$^{1}$\footnote{Corresponding author.}\\
{$^{1 \ast}$Email: omenyi.louis@funai.edu.ng}
\and
McSylvester  Omaba$^{2}$\\
{\tt $^{2}$Email:  mcomaba@uhb.edu.sa}}
\date{$^1$Department of Mathematics/Computer Science/Statistics/Informatics, 
Alex Ekwueme Federal University, Ndufu-Alike, Nigeria.\\[0.3cm]
$^2$Department of Mathematics, College of Science, 
University of Hafr Al Batin,  P. O Box 1803 Hafr Al Batin 31991, KSA.\\[0.3cm]
}%% 

\maketitle

\begin{abstract}
Filtration of quantifiable objects by smoothing kernels on Riemannian manifolds 
for visualisation is an ongoing research. However, using common filters created 
for linear domains on manifolds with non-Euclidean topologies can yield misleading 
results. While there is a lot of ongoing research on convolution of quantifiable 
functions with  smoothing kernels on the lower dimensional manifolds, higher-dimensional 
problems particularly pose a challenge. One important generalization of lower 
dimensional compact Riemannian manifolds is the unit hypersphere. In this paper, 
we derive explicit forms of convolution formulae for Gegenbauer kernel filtration 
on the surface of unit hypersphere. We prove that the Gegebauer filtration is the 
limit of a sequence of finite linear combinations of the hyperspherical Legendre harmonics, 
among other results.
\end{abstract}
\textbf{\emph{Keywords:}} Spherical Laplacian; hypersphere; 
Gegenbauer kernel; convolution; rotations; special orthogonal group $SO(n).$\\
\textbf{\emph{2000 MSC Class:}} 11M06;  11M35; 11M36;  35P05.
 
\section{Introduction}
Spherical harmonic analysis is basically the spectral theory of a differential operator such as  
the spherical Laplacian $\Delta _{S^{n}}$ on a compact Riemannian manifold e.g. 
the hypersphere,  $(S^{n}, g).$ In this analysis, spherical harmonics play salient roles. 
Spherical harmonics analysis is a process of decomposing a function on a sphere into 
components of various wavelengths using surface spherical harmonics as base functions, \cite{IBV}. 
Spherical synthesis combines components of various wavelengths to generate function values
on a sphere and is the reverse process of harmonic analysis. Spherical harmonic analysis 
and synthesis have many real-life and scientific applications, see e.g. \cite{IBV} and \cite{TBU}.
 
Harmonics analysis on Euclidean spaces and in particular the theory of 
spherical harmonics provide an elegant and instructive application of group 
theoretical concepts to various questions of classical function theory, see e.g. \cite{Agata}. 
Eigenspaces of spherical Laplacian with irreducible components with respect to the natural action 
of the group $SO(n)$ within the $L^2$ space on the sphere is an interesting occurrence. 
The role of classical orthogonal polynomials such as the Gegenbauer polynomials  as reproducing 
kernels for the spaces of spherical harmonics of a given degree, or more generally, 
as providing an explicit construction of symmetry adapted basis functions for those 
spaces has been studied extensively, see e.g. \cite{RCP2} and \cite{OM}. 
Also \cite{STRA} studied the connection of the Fourier transform on the Euclidean space to the 
Hankel transform obtained via restriction to $SO(n)$-finite functions and various integral 
identities of the Hecke–Bochner type resulting there. 
The generalized concept of convolution on groups is intimately related to the concept of 
filtering on homogeneous spaces. Some insight into spherical filtering with particular 
emphasis on wavelet transform can be found in \cite{Heal},  \cite{Anto} and \cite{IBV}.

The goal of the present paper is to present a novel form of the Gegenbauer kernel filtration 
of harmonic functions on the hypersphere. This puts in limelight signal processing methods 
on non-Euclidean spaces and in particular on the hypersphere.  The most basic is the notion 
of Fourier transform, that on the sphere corresponds to the expansion of functions into series 
of familiar spherical harmonics. Vast amount of literature is available on such expansions, 
mostly from quantum mechanics and mathematical physics, see e.g.  \cite{Sze}. 

It is known from group representation theory that Fourier transform can be defined on 
any compact Lie group and consequently on homogeneous spaces of these groups \cite{KAW}. 
An example is the group of rotations of a three-dimensional space about the origin, $SO(3),$ 
and the $2$-sphere, $S^{2},$ as a corresponding homogeneous space. We derive some general 
formulae for the convolution of $SO(n)$-finite functions, including recent generalizations 
of Fourier Spherical Harmonic expansions and discuss their function theoretic consequences. 
In particular,  we derive the convolutions with Gegenbauer kernel on the hypersphere, 
$S^{n-1} \subset \BR ,$ regarded as a homogeneous space of the group of rotations of the 
$n$-dimensional space, $SO(n).$  We proceed with clarifying basic terms and concepts. 

\section{Notations and preliminaries}
By a hypersphere in $\BR$ we refer to a set of points whose
Euclidean distance from the origin is equal to unity, that is
\begin{equation}\label{id1}
S^{n-1} = \{ x \in \BR : x^{T}x = 1 \} 
\end{equation}
where $n > 3.$ When $n=2,$ $S^{1}$ is the unit circle and for $n=3$ we have the unit 
$2$-sphere, $S^{2}.$ The superscript $n-1$ in $S^{n-1}$ 
is due to the fact that a sphere in $\BR$ as defined by 
(\ref{id1}) is a submanifold of dimension $n-1;$ see e.g. \cite{JOST} and \cite{Lee} for details.  
The hypersphere $S^{n-1}$ may be parameterized by a set of hyperspherical polar coordinates. 
If $(x_{1} , x_{2}, \cdots , x_{n})$ are Cartesian
coordinates in $\BR ,$ then we define the angles $\theta _{1} , \theta _{2} , \cdots 
\theta _{n-1}$ with $\theta _{1} ,\theta _{2} , \cdots , \theta _{n-2} \in [0,\pi]$  and 
$\theta _{n-1} \in [0,2 \pi]$  such that 
\begin{eqnarray}\label{id2}
\left.\begin{array}{rcl}
x_{1} &=&  \cos \theta _{1}\\
x_{2} &=&  \sin \theta _{1} \cos \theta _{2} \\
x_{3} &=&  \sin \theta _{1} \sin \theta _{2} \cos \theta _{3} \\
x_{4} &=&  \sin \theta _{1} \sin \theta _{2} \sin \theta _{3} \\
\vdots \\
x_{n-1} &=&  \sin \theta _{1} \sin \theta _{2} \cdots \sin \theta _{n-2}\cos \theta _{n-1} \\
x_{n} &=&  \sin \theta _{1} \sin \theta _{2} \cdots \sin \theta _{n-2}\sin \theta _{n-1} .
\end{array}\right\} 
\end{eqnarray}
This choice of coordinates is not unique, but it is a natural generalization of 
spherical polar coordinates in $\RR ^{3} .$  In the familiar case of 
$S^{2} \subset \RR ^{3},$  $\theta_{1}$ 
corresponds to the elevation and $\theta _{2}$ corresponds to the azimuth, 
often denoted $(\theta , \psi) = (\theta _{2} , \theta _{1}).$ 

Basic concepts related to function theory on the hypersphere used throughout this paper 
include the Euler Gamma function, volume element, surface element and some functional spaces 
over the hypersphere. We briefly explain them here. As it is well known, for any $x \in \RR ,$ 
we denote the smallest integer that is greater or equal to $x$ by $[x].$ Also, the 
Euler Gamma function, $\Gamma ,$ in argument $s \in \CC $ is 
\[\displaystyle{\Gamma (s) := \int _{0}^{\infty} e^{-t} t^{s-1} dt , ~~ \Re (s) > 0 }. \]
It can be shown that $\Gamma (\frac{1}{2}) = \sqrt{\pi}$ and 
$\displaystyle{\Gamma (n + \frac{1}{2}) = \frac{(2n)}{2^{2n}n!}\sqrt{\pi} } .$ 
On the hypersphere $S^{n},$ we use  $dV_{n}$ for $n$-dimensional volume element and
 $dS_{n}$ for $n$-dimensional surface element over the unit sphere $S^{n}.$ 
 We also use $d \xi$ for surface element over a general domain. 

A point on the hypersphere is specified as follows. For $n \geq 3 ,$  let 
$\xi = \xi _{(n)} \in S^{n}$ then  $\xi _{(n)} = t e_{n} + \sqrt{1-t^{2}}\xi _{(n-1)}$ 
with $t \in [-1,1] ,~  \xi _{(n-1)} \in S^{n-1}$ and 
$ e_{n} = (0, 0, \cdots , 0, 1)^{T}$ is basis vector in $\BR .$  When we write 
$\xi _{(n-1)} \in S^{n}$ we mean that 
$\xi = (\xi _{1} , \xi _{2} , \cdots , \xi _{n-1} , 0 ) \in S^{n}.$ 

It can be shown that for $n \geq 3,$
\begin{equation}\label{surf1}
\Big(t e_{n} + \sqrt{1-t^{2}}\xi _{(\xi _{(n-1)})} \Big) d S_{n-1} = 
 (1-t^{2})^{\frac{n-3}{2}} dt dS_{n-2} (\xi _{(n-1)}) 
 = (1-t^{2})^{\frac{n-3}{2}} dt dS_{n-2}  . 
\end{equation}
 
The surface area of the hypersphere is by definition given by 
\[ |S^{n-1}| := \int _{S^{n-1}} d S_{n-1} \] 
and so from (\ref{surf1})  we have  
\[ |S^{n-1}| = \int _{-1}^{1} (1 -t^{2})^{\frac{n-3}{2}} dt \int _{S^{n-2}} d S_{n-2}  
= |S^{n-2}| \int _{-1}^{1} (1 -t^{2})^{\frac{n-3}{2}} dt .\]
In the integral, 
$\displaystyle{\int _{-1}^{1} (1 -t^{2})^{\frac{n-3}{2}} dt} ,$ let  $s=t^{2}$ so that 
\[ \int _{-1}^{1} (1 -t^{2})^{\frac{n-3}{2}} dt = \int _{0}^{1} s^{-1/2} (1-s)^{\frac{n-3}{2}} 
ds  = \beta (\frac{1}{2} , \frac{n-1}{2})  = 
\frac{\sqrt{\pi} \Gamma (\frac{n-1}{2})}{\Gamma (\frac{n}{2})} .\]
Thus we have the recursive relation 
 \begin{equation}\label{surf2}
 |S^{n-1}| = \frac{\sqrt{\pi} \Gamma (\frac{n-1}{2})}{\Gamma (\frac{n}{2})}|S^{n-2}| 
 = \frac{2 \pi ^{n/2}}{\Gamma (n/2)}; ~~ n \geq 3 .
 \end{equation}

If $A \in \RR ^{n \times n}$ is orthogonal, then there exists $\xi \in S^{n-1}$ and 
$x \in \BR$ such that 
\[ d S_{n-1} (A \xi)  =  d S_{n-1} (\xi) ~~ \text{and} ~~ 
d V_{n} (A x)  =  d V_{n} (x) . \]
In polar coordinates, we relate $x_{(n)}$ with $\xi _{(n)}$ using 
\[ x_{(n)} = |x| \xi _{(n)},  ~~
\xi _{(n)} = \xi  \in S^{n-1} . \]
Thus, 
\begin{equation}\label{vol}
dV_{n}(|x| \xi) = |x|^{n-1} d |x| dS_{n-1}(\xi ) .
\end{equation}
On the hypersphere,  $(S^{n},g),$ where $g$ is a Riemannian metric,  
we define the Riemannian (volume) measure $dV _{g}$ in terms of the metric $g$ 
in standard hyperspherical coordinates (\ref{id2})  as 
\begin{equation}\label{col10}
dV _{g} = \sin ^{n-1} d \theta d \omega := 
\sin ^{n-1} d \theta \sin ^{n-2}\sin ^{n-3} \cdots 
\sin \theta_{2}d \theta_{2} \cdots d \theta_{n-1} .
\end{equation}

The space of real or complex-valued continuous functions $C(S^{n-1})$ over the unit 
hypersphere $S^{n-1}$ is a Banach space with the canonical norm 
\[ ||f||_{\infty} = \sup _{\xi \in S^{n-1}} |f(\xi )|  .\]
Similarly, the space of real or complex-valued 
square integrable functions $L^{2}(S^{n-1})$ over  $S^{n-1}$ is an Hilbert space 
with canonical inner product 
\begin{equation}\label{ips}
\lka f ,g \rka := \int _{S^{n-1}}f \bar{g} d S_{n-1}
\end{equation}
with the induced norm 
\begin{equation}\label{ipn}
|| f ||_{2} := \sqrt{\lka f ,f \rka} .
\end{equation}
The space $L^{2}(S^{n-1})$ is the closure of $C(S^{n-1})$ with respect to the norm 
(\ref{ipn}). We study a function $f$ on $S^{n-1}$ through its extension 
\[ f^{\ast} (x) := f (\frac{x}{|x|}) = f (\xi ) ~~\text{for} ~~ 
x \in \Omega _{\epsilon} := \{ x \in \BR : |x| \in [1- \epsilon , 1 + \epsilon ] \} . \]

\section{Gegenbauer Polynomial}
Gegenbauer polynomials enables one express Gegenbauer filtration kernels. Thus, 
following \cite{Avery, KAW, MWW, OM} and \cite{MMT}, we briefly explore them in this section. 

The system of Gegenbauer polynomials, which we denote by $C_{l}^{\alpha}$  
for degree $l$ and index $\alpha ,$ are ultraspherical polynomials expressible  
through hyperspherical Legendre polynomials $\{ P_{l,n}(x): l = 0,1,2,\cdots \}.$  
Hyperspherical Legendre polynomials are a classical family of polynomials orthogonal with 
respect to the weight function $w _{\alpha} = (1-x^{2})^{\alpha-\frac{1}{2}}$ on 
the support interval $[-1,1] .$  We make these definitions formal in what follows.

\begin{definition}
The function $P_{l,n}$ is hyperspherical Legendre polynomial of degree $l$ in $n$ dimension 
and it is given by 
\begin{equation}\label{Lpoly}
P_{l,n} (t) := l! \Gamma (\frac{n-1}{2}) \sum _{j=0}^{[\frac{l}{2}]} (-1)^{j} 
\frac{(1-t^{2})^{j}t^{l-2j}}{4^{j}j! (l-2j)! \Gamma (j+ \frac{n-1}{2})}
\end{equation}
for $t \in [-1,1].$
\end{definition}
Specifically, $P_{l,n}(1) = 1,$ for example. 

There are other forms of definition of  $P_{l,n}.$ Popular among those definitions 
is the Rodrigues formulation.
\begin{theorem}\label{Rodr}(Rodrigues representation). 
The hyperspherical Legendre polynomial of degree $l$ in $n$ dimension \eqref{Lpoly}  
in Rodrigues form  is given by 
\begin{equation}\label{Rodrf}
P_{l,n}(t) = (-1)^{l} R_{l,n}
 (1 - t^{2})^{\frac{3 - n}{2}}\frac{d^{l}}{dt^{l}}(1 - t^{2})^{l + \frac{n - 3 }{2}}
~~\mathrm{with}~~ n \geq 2,
\end{equation}
where the Rodrigues constant $R_{l,n}$ is given by
 \begin{equation}\label{Rodrc}
R_{l,n} = \frac{\Gamma(\frac{n -1}{2})}{2 ^{l} \Gamma(l + \frac{n - 1}{2})} . 
\end{equation}
\end{theorem}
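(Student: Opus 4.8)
\noindent\emph{Proof strategy.} The plan is to reduce \eqref{Rodrf} to a single explicit differentiation and then match coefficients with \eqref{Lpoly}. Setting $a=\frac{n-3}{2}$, one has $\frac{3-n}{2}=-a$, $\frac{n-1}{2}=a+1$, so $\Gamma(\frac{n-1}{2})=\Gamma(a+1)$, $\Gamma(j+\frac{n-1}{2})=\Gamma(j+a+1)$ and, by \eqref{Rodrc}, $R_{l,n}=\frac{\Gamma(a+1)}{2^{l}\Gamma(l+a+1)}$. In this notation the right-hand side of \eqref{Rodrf} is $(-1)^{l}R_{l,n}(1-t^{2})^{-a}\frac{d^{l}}{dt^{l}}(1-t^{2})^{l+a}$, so the whole statement follows once $\frac{d^{l}}{dt^{l}}(1-t^{2})^{l+a}$ is computed in closed form. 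Note that for even $n$ the exponent $a$ is not an integer, so $(1-t^{2})^{l+a}$ is treated purely through the $\Gamma$-calculus; the prefactor $(1-t^{2})^{-a}$ will restore a genuine polynomial.

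I would establish, by induction on $m$ for $0\le m\le l$, the representation
\begin{equation*}
\frac{d^{m}}{dt^{m}}(1-t^{2})^{l+a}=(1-t^{2})^{l+a-m}\sum_{j=0}^{[\frac{m}{2}]}c_{m,j}\,(1-t^{2})^{j}t^{m-2j},
\end{equation*}
with the candidate coefficients
\begin{equation*}
c_{m,j}=(-1)^{m-j}\,2^{m-2j}\,\frac{m!}{j!\,(m-2j)!}\,\frac{\Gamma(l+a+1)}{\Gamma(l+a-m+j+1)}.
\end{equation*}
The base case $m=0$ is immediate ($c_{0,0}=1$). For the inductive step I would differentiate the displayed right-hand side once: each summand $c_{m,j}(1-t^{2})^{l+a-m+j}t^{m-2j}$ yields, by the product rule, a multiple of $(1-t^{2})^{j}t^{(m+1)-2j}$ and a multiple of $(1-t^{2})^{j+1}t^{(m+1)-2(j+1)}$, both carrying the common factor $(1-t^{2})^{l+a-m-1}$. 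Collecting the coefficient of $(1-t^{2})^{j}t^{(m+1)-2j}$ gives the two-term recursion $c_{m+1,j}=-2(l+a-m+j)\,c_{m,j}+(m-2j+2)\,c_{m,j-1}$.

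It remains to check that the candidate $c_{m,j}$ obeys this recursion. Inserting the closed form and using $\Gamma(x+1)=x\Gamma(x)$ to align all $\Gamma$-factors to $\Gamma(l+a-m+j)$, the verification reduces to the elementary identity $\frac{m!}{j!(m-2j)!}+\frac{2\,m!}{(j-1)!(m-2j+1)!}=\frac{(m+1)!}{j!(m+1-2j)!}$, which follows at once on multiplying through by $j!(m+1-2j)!/m!$. Specialising to $m=l$ gives $\frac{d^{l}}{dt^{l}}(1-t^{2})^{l+a}=(1-t^{2})^{a}\sum_{j=0}^{[\frac{l}{2}]}c_{l,j}(1-t^{2})^{j}t^{l-2j}$ with $c_{l,j}=(-1)^{l-j}2^{l-2j}\frac{l!}{j!(l-2j)!}\frac{\Gamma(l+a+1)}{\Gamma(j+a+1)}$. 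Multiplying by $(-1)^{l}R_{l,n}(1-t^{2})^{-a}$ cancels the $(1-t^{2})^{\pm a}$ factors and collapses the constant to $(-1)^{j}\frac{l!\,\Gamma(a+1)}{4^{j}j!(l-2j)!\Gamma(j+a+1)}$, which is precisely the summand in \eqref{Lpoly}. I expect the main obstacle to be exactly this coefficient bookkeeping: organising the $l$-fold derivative into the $(1-t^{2})^{j}t^{l-2j}$ basis and pinning down the closed form for $c_{m,j}$, the truncation at $j=[\frac{m}{2}]$ being automatic since $1/(m-2j)!$ vanishes for $2j>m$.
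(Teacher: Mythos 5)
Your proof is correct: I verified the base case, the derivation of the two-term recursion $c_{m+1,j}=-2(l+a-m+j)\,c_{m,j}+(m-2j+2)\,c_{m,j-1}$, the binomial identity confirming your closed form for $c_{m,j}$, and the final collapse of $(-1)^{l}R_{l,n}\,c_{l,j}$ to $(-1)^{j}\frac{l!\,\Gamma(\frac{n-1}{2})}{4^{j}j!\,(l-2j)!\,\Gamma(j+\frac{n-1}{2})}$, which is exactly the summand of \eqref{Lpoly}; the convention $1/(m-2j)!=0$ for $2j>m$ also handles the truncation at $j=[\frac{m}{2}]$ correctly. Be aware, however, that the paper itself contains no proof of Theorem \ref{Rodr}: it states the result and defers to \cite{MMT}, \cite{KAW} and \cite{OM14}, so there is no internal argument to compare against. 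The proofs in such references are typically structural rather than computational: one shows by repeated integration by parts that the Rodrigues expression defines a polynomial of degree $l$ orthogonal to all lower-degree polynomials with respect to the weight $(1-t^{2})^{\frac{n-3}{2}}$, and then identifies it with $P_{l,n}$ via the normalization $P_{l,n}(1)=1$ and uniqueness of the orthogonal polynomial system. Your route is genuinely different and more elementary: a direct induction yielding a closed form for $\frac{d^{m}}{dt^{m}}(1-t^{2})^{l+a}$ in the basis $(1-t^{2})^{j}t^{m-2j}$, followed by term-by-term coefficient matching against the series that the paper actually takes as the definition of $P_{l,n}$. What your approach buys: it is fully self-contained, needs no orthogonality theory, works uniformly for the half-integer exponents $a=\frac{n-3}{2}$ arising when $n$ is even, and proves equality with \eqref{Lpoly} itself rather than with an abstract characterization. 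What the standard approach buys: brevity and the structural explanation of why Rodrigues-type formulas encode orthogonality; your argument is heavier on bookkeeping but fills the gap the paper leaves by outsourcing the proof to the literature.
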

For proof, see e.g. \cite{MMT, KAW} and \cite{OM14}.

We remark that for $n=2,$ we have the relation 
\[ \Gamma(l + \frac{1}{2})= \frac{(2l)!}{2^{2l}l!}\Gamma(\frac{1}{2})\]
and so 
\[ P_{l,2}(t)= (-1)^{l} \frac{2^{l}l!}{(2l)!} (1-t^{2})^{1/2}\Big( \frac{d}{dt}\Big)^{l}
(1-t^{2})^{l-1/2}, ~~~ l \in \mathbb{N}_{0}\]
which can be simplified to 
\[ P_{l,2}(t)= \cos (l \arccos t); ~~ t \in [-1,1] .\]
For $n=3,$ one recovers from \eqref{Rodrf} the standard Rodrigues representation 
formula for the standard Legendre polynomials as 
\begin{equation}\label{Lep1}
P_{l,3}(t)= \frac{1}{2^{l}l!}\Big( \frac{d}{dt}  \Big)^{l}(t^{2}-1)^{l}, ~~~ l \in \mathbb{N}_{0}.
\end{equation} 
Moreover, \cite{MMT} proved an integral representation of the hyperspherical harmonics 
as the theorem \eqref{hsi} that follows.
\begin{theorem}\label{hsi}
For $l \in \mathbb{N}_{0},$ $n\geq 3$ and for any $t \in [-1,1] ,$ 
\begin{equation}\label{hsi1}
P_{l,n}(t) = \frac{|S^{n-3}|}{|S^{n-2}|}\int_{-1}^{1}\Big[ t + 
i (1-t^{2})^{\frac{1}{2}}) s \Big]^{l} (1-s^{2})^{\frac{n-4}{2}} ds.
\end{equation}
\end{theorem}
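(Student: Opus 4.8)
The plan is to verify \eqref{hsi1} by expanding its right-hand side explicitly and matching it term-by-term against the defining series \eqref{Lpoly}; since every sum involved is finite, no questions of convergence or interchange of limits arise, and the whole argument is a bookkeeping computation built on three ingredients. First I would apply the binomial theorem to the integrand,
\[
\Big[ t + i(1-t^2)^{1/2} s \Big]^{l} = \sum_{k=0}^{l} \binom{l}{k} t^{l-k}\, i^{k}\, (1-t^2)^{k/2}\, s^{k}.
\]
Because the weight $(1-s^2)^{\frac{n-4}{2}}$ is even in $s$, every term with $k$ odd integrates to zero over the symmetric interval $[-1,1]$; only $k=2j$ contributes, and there $i^{2j}=(-1)^{j}$ and $(1-t^2)^{k/2}=(1-t^2)^{j}$. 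Hence the integral collapses to a finite sum over $j=0,1,\dots,[l/2]$ whose $j$-th term is
\[
\binom{l}{2j}\, t^{l-2j}\,(-1)^{j}\,(1-t^2)^{j}\int_{-1}^{1} s^{2j}(1-s^2)^{\frac{n-4}{2}}\,\d s.
\]

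Second, I would evaluate the surviving moment integral by the substitution $u=s^2$, exactly the device already used in the excerpt when deriving \eqref{surf2}, obtaining a Beta function,
\[
\int_{-1}^{1} s^{2j}(1-s^2)^{\frac{n-4}{2}}\,\d s = \beta\!\left(j+\half,\,\frac{n-2}{2}\right) = \frac{\Gamma(j+\half)\,\Gamma(\frac{n-2}{2})}{\Gamma(j+\frac{n-1}{2})}.
\]
Next I would reduce the prefactor using the closed form $|S^{m-1}| = 2\pi^{m/2}/\Gamma(m/2)$ from \eqref{surf2}, which yields
\[
\frac{|S^{n-3}|}{|S^{n-2}|} = \frac{1}{\sqrt{\pi}}\,\frac{\Gamma(\frac{n-1}{2})}{\Gamma(\frac{n-2}{2})}.
\]
The factor $\Gamma(\frac{n-2}{2})$ from the Beta function then cancels against the one in the denominator of the prefactor, and the entire right-hand side of \eqref{hsi1} becomes
\[
\frac{\Gamma(\frac{n-1}{2})}{\sqrt{\pi}}\sum_{j=0}^{[l/2]} \frac{l!}{(2j)!\,(l-2j)!}\,(-1)^{j}\,\frac{(1-t^2)^{j}\, t^{l-2j}\,\Gamma(j+\half)}{\Gamma(j+\frac{n-1}{2})}.
\]

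Finally, comparing this expression with the defining series \eqref{Lpoly} term-by-term, the common factors $l!$, $\Gamma(\frac{n-1}{2})$, $(-1)^{j}(1-t^2)^{j}t^{l-2j}$, $(l-2j)!$, and $\Gamma(j+\frac{n-1}{2})$ all line up, and the two agree precisely when
\[
\frac{\Gamma(j+\half)}{\sqrt{\pi}\,(2j)!} = \frac{1}{4^{j}\,j!},
\]
i.e.\ when $\Gamma(j+\half)=\sqrt{\pi}\,(2j)!/(4^{j}j!)$, which is the Legendre duplication identity recorded among the preliminaries. I expect this last gamma-function identity to be the only delicate point of the argument: the rest is the parity cancellation of odd powers together with a single Beta-integral evaluation, so once the duplication formula is invoked the two series are seen to be identical and the theorem follows.
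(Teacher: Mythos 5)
Your proof is correct, but there is nothing in the paper to compare it against: Theorem \ref{hsi} is stated without proof and attributed to Morimoto \cite{MMT}, so your argument is a genuine addition rather than a variant of the paper's route. What you do differently is make the result self-contained, verifying the integral representation directly against the paper's own defining series \eqref{Lpoly} by a finite-sum computation, and every ingredient you need is already recorded in the preliminaries: binomial expansion plus the parity cancellation of odd powers of $s$ against the even weight $(1-s^{2})^{\frac{n-4}{2}}$; the Beta evaluation $\int_{-1}^{1}s^{2j}(1-s^{2})^{\frac{n-4}{2}}\,ds=\Gamma(j+\tfrac{1}{2})\Gamma(\tfrac{n-2}{2})/\Gamma(j+\tfrac{n-1}{2})$; the closed form $|S^{m-1}|=2\pi^{m/2}/\Gamma(m/2)$ from \eqref{surf2}, which gives $|S^{n-3}|/|S^{n-2}|=\Gamma(\tfrac{n-1}{2})/\bigl(\sqrt{\pi}\,\Gamma(\tfrac{n-2}{2})\bigr)$; and the half-integer Gamma value $\Gamma(j+\tfrac{1}{2})=\sqrt{\pi}\,(2j)!/(4^{j}j!)$, after which the surviving sum coincides with \eqref{Lpoly} term by term. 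Two small remarks. First, the Gamma identity you invoke at the end appears in the paper's preliminaries with a typographical slip (the paper writes $(2n)$ where $(2n)!$ is meant); the form you use is the correct one, and calling it the Legendre duplication formula is fine since it is exactly the duplication identity at half-integer arguments. Second, it is worth stating explicitly that the argument covers the boundary case $n=3$, where the weight $(1-s^{2})^{-1/2}$ is singular at $s=\pm 1$ but still integrable, so the Beta integral $\beta(j+\tfrac{1}{2},\tfrac{1}{2})$ converges and the claimed range $n\geq 3$ is fully handled. In sum: the citation buys the paper brevity, while your computation buys a complete elementary proof from the paper's own definitions; either is acceptable, but yours would let the paper stand on its own.
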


Now, let $\mathcal{H}_{l,n}$ denote the space homogeneous Legendre polynomials of degree $l$ 
in dimension $n.$ We call function $f \in \mathcal{H}_{l,n}$ such that $\Delta f =0$ 
hyperspherical  harmonic.  We denote the space of hyperspherical harmonic polynomials 
restricted to the unit hypersphere, $S^{n},$ by $\mathcal{Y}_{l,n}.$  
So any $Y_{l} \in \mathcal{Y}_{l,n}$ is related to a homogeneous 
harmonic $ h_{l} \in \mathcal{H}_{l,n}$ as follows $h_{l} (r \xi) = r^{l}Y_{l}(\xi )$ 
where $r =  |h_{l}|.$ So, they have the same dimension. 

The hyperspherical harmonic polynomials have the following properties.

\begin{theorem} (Addition theorem). \label{addthm} 

Let $\{ \psi _{l,j}: 1 \leq j \leq  d_{l}(n)  \}$ be an orthonormal basis of the
 space of  $n$-dimensional spherical harmonics  $\mathcal{H}_{l}(S^{n}),$ i.e:
\begin{equation}\label{gph10a}
\int _{S^{n}}\psi _{l,j}(x) \bar{\psi }_{l,m}(x) dV_{g}(x) = \delta _{jm};~~1 
\leq j,m \leq d_{l}(n).
\end{equation}
Then
\begin{equation}\label{gph11}
\sum _{j = 1}^{d_{l}(n)} \psi _{l,j}(x) \bar{\psi }_{l,m}(y) = 
\frac{d_{l}(n)}{|S^{n}|} P_{l,\frac{(n - 1)}{2}}(x \cdot y)
\end{equation}
where $P_{l,n}(t)$ are the hyperspherical Legendre polynomials of degree  
$l$ in $n$ dimensions. The dimension of $\mathcal{H}_{l}(S^{n})$ is $d_{l}(n)$ 
given by the formula 
\[ d_{l} (n) = 
\begin{pmatrix}
l + n \\
n
\end{pmatrix}
-
\begin{pmatrix}
l + n - 2 \\
n
\end{pmatrix}   = \frac{(2l + n - 1)(l + n - 2)!}{l! (n - 1)!},
 ~~  \text{for} ~~  l \in  \mathbb{N} .\]
\end{theorem}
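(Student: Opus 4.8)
The plan is to read the left-hand side as the reproducing kernel of the finite-dimensional Hilbert space $\mathcal{H}_{l}(S^{n})$ and then exploit rotational symmetry to force it to be a zonal function. First I would fix $y \in S^{n}$ and set $K_{y}(x) := \sum_{j=1}^{d_{l}(n)} \psi_{l,j}(x)\,\overline{\psi_{l,j}(y)}$. Because each $\psi_{l,j}$ lies in $\mathcal{H}_{l}(S^{n})$ and the sum is finite, $K_{y} \in \mathcal{H}_{l}(S^{n})$ for every fixed $y$. Writing an arbitrary $Y \in \mathcal{H}_{l}(S^{n})$ as $Y = \sum_{k} c_{k}\psi_{l,k}$ with $c_{k} = \langle Y, \psi_{l,k}\rangle$ and using the orthonormality \eqref{gph10a}, a one-line computation gives the reproducing identity $\langle Y, K_{y}\rangle = Y(y)$. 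In particular $K_{y}$ does not depend on the choice of orthonormal basis, which is the property I will lean on.

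Next I would show that $K_{y}(x)$ depends on the pair $(x,y)$ only through the inner product $x\cdot y$. For any orthogonal transformation $R$ of the ambient space (hence preserving $S^{n}$ and the measure $dV_{g}$), the functions $x \mapsto \psi_{l,j}(R^{-1}x)$ again form an orthonormal basis of $\mathcal{H}_{l}(S^{n})$, since $\mathcal{H}_{l}(S^{n})$ is $R$-invariant and $dV_{g}$ is rotation invariant. Basis independence of $K$ then gives $K_{y}(x) = K_{R^{-1}y}(R^{-1}x)$. As the orthogonal group acts transitively on the pairs of points of $S^{n}$ sharing a prescribed inner product, there is a single-variable function $G$ on $[-1,1]$ with $K_{y}(x) = G(x\cdot y)$.

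The crux is to identify $G$, and this is where I expect the real work. For fixed $y$ the map $x \mapsto G(x\cdot y)$ is a degree-$l$ harmonic on $S^{n}$ that is invariant under every rotation fixing $y$, i.e. a zonal harmonic with pole $y$. I would show this space is one-dimensional by writing the Laplace--Beltrami eigenvalue equation $\Delta_{S^{n}}u = -l(l+n-1)\,u$ for a function of the polar angle alone: with $t = x\cdot y$ this reduces the spherical Laplacian to the Gegenbauer-type ordinary differential equation satisfied by $P_{l,n}$, and its only solution bounded at both poles $t = \pm 1$ is, up to a scalar, $P_{l,n}(t)$. Hence $G(t) = c\,P_{l,n}(t)$. (An equivalent route is to quote irreducibility of $\mathcal{H}_{l}(S^{n})$ under the rotation group, from which the one-dimensionality of the zonal subspace follows.)

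Finally I would fix the constant $c$. Setting $x = y$ gives $K_{y}(y) = \sum_{j}|\psi_{l,j}(y)|^{2} = G(1) = c\,P_{l,n}(1) = c$, where I use $P_{l,n}(1)=1$. Integrating $K_{y}(y)$ over $y \in S^{n}$ and applying \eqref{gph10a} yields $c\,|S^{n}| = \sum_{j}\int_{S^{n}}|\psi_{l,j}|^{2}\,dV_{g} = d_{l}(n)$, so $c = d_{l}(n)/|S^{n}|$. Substituting back produces $\sum_{j}\psi_{l,j}(x)\,\overline{\psi_{l,j}(y)} = \frac{d_{l}(n)}{|S^{n}|}P_{l,n}(x\cdot y)$, the asserted identity; the explicit value of $d_{l}(n)$ is the standard count of linearly independent homogeneous harmonic polynomials of degree $l$ in $n+1$ variables and can be verified separately.
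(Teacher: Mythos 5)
Your proof is correct. The paper itself gives no argument for Theorem \ref{addthm} at all --- immediately after the statement it defers to \cite{OM} and \cite{MMT} --- and your reproducing-kernel argument is precisely the standard proof found in those references and in \cite{KAW}: basis independence of $K_y$, rotation invariance forcing $K_y(x)=G(x\cdot y)$, one-dimensionality of the zonal subspace (via the Gegenbauer-type ODE or irreducibility) identifying $G$ with a multiple of $P_{l,n}$, and the evaluation-at-$x=y$ plus integration step fixing the constant $d_l(n)/|S^{n}|$. Note also that your formulation silently repairs two defects in the paper's statement: the left-hand side of \eqref{gph11} sums over $j$ yet writes $\bar{\psi}_{l,m}(y)$ with a free index $m$, and the subscript $P_{l,(n-1)/2}$ conflates the dimension parameter with a Gegenbauer index; your version with $\overline{\psi_{l,j}(y)}$ and $P_{l,n}(x\cdot y)$ is the statement that is actually true and consistent with the normalization $P_{l,n}(1)=1$ used to fix the constant.
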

For proof, one may see \cite{OM} and \cite{MMT}. This means in particular 
that $P_{l,(n-1)/2}(x \cdot y)$ is a harmonic function on $S^{n}$  with eigenvalue 
$\lambda_l = l(l+n-1)$ for the  eigenvalue problem  
\[\DEL _{S^{n}} P_{l,(n-1)/2}(\theta)  = \lambda_l  P_{l,(n-1)/2}(\theta)\]
where  $\DEL _{S^{n}}$ is the usual Laplacian on $S^{n}$ defined by 
\begin{equation}\label{gphe}
\DEL _{S^{n}} :=  \frac{1}{\sin ^{n -1} \theta } \frac{\partial }{\partial \theta}
\big( \sin ^{n -1}   \frac{\partial  }{\partial \theta } \big )  +
 \frac{1}{\sin ^{2} \theta}
 \DEL _{S^{n-1}}  .
 \end{equation}

We note that \eqref{gph11} can also be expressed in terms of Jacobian 
polynomials $P_{l}^{\alpha, \alpha}$  as follows:
\begin{equation}\label{jac1}
\sum _{j = 1}^{d_{l}(n)} \psi _{l,j}(x) \bar{\psi }_{l,m}(y) = \frac{1}
{P_{l}^{\big(\frac{(n - 2)}{2}, \frac{(n - 2)}{2}\big)}(1)} P_{l}^{(\frac{(n - 2)}
{2}, \frac{(n - 2)}{2})}(\cos \theta) ~~ \text{for} ~~ x \cdot y = \cos \theta .
\end{equation}

A special case of the hyperspherical Legendre polynomials are the familiar spherical harmonics
on $S^{2}$ usually given by
\begin{equation}\label{id43}
Y_{l,m}(\theta , \psi) = \tilde{N}_{lm} P_{l,m} (\cos \theta )\exp (j m \psi)
\end{equation}
with normalisation constant 
\begin{equation}\label{id44}
\tilde{N}_{lm}  =   \sqrt{(2l + 1)\frac{(l -m)!}{(l+m)!}}
\end{equation}
chosen so that the norm of each harmonic would be unity.

We have the following lemma. 
\begin{lemma}\label{addt}
The hyperspherical harmonic polynomials are orthogonal:
\begin{eqnarray}\label{ifa}
\int _{S^{n}} P_{l,n}(x \cdot y)P_{j,n}(x \cdot y) 
dV_{n}(x) &= &
\begin{cases} 
 \frac{|S^{n}|}{d_{l}(n)} & \text{if} ~~ l = j . \\
 0 & \text{if} ~~ l \neq j .
 \end{cases}
\end{eqnarray}
\end{lemma}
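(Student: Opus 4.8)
The plan is to read \eqref{ifa} directly off the addition theorem (Theorem~\ref{addthm}), treating \eqref{gph11} as a reproducing-kernel identity. First I would fix the measures: the integration $dV_{n}$ over $S^{n}$ in \eqref{ifa} is the Riemannian surface measure $dV_{g}$ appearing in the orthonormality relation \eqref{gph10a}, so the orthonormal family $\{\psi_{l,j}\}_{j=1}^{d_{l}(n)}$ of degree-$l$ harmonics satisfies $\int_{S^{n}}\psi_{l,a}(x)\bar{\psi}_{l,b}(x)\,dV_{n}(x)=\delta_{ab}$, while harmonics of different degrees are mutually orthogonal. Rewriting \eqref{gph11} as the zonal expansion $P_{l,n}(x\cdot y)=\frac{|S^{n}|}{d_{l}(n)}\sum_{a=1}^{d_{l}(n)}\psi_{l,a}(x)\bar{\psi}_{l,a}(y)$ exhibits $x\mapsto P_{l,n}(x\cdot y)$ as a finite combination of degree-$l$ harmonics, which is exactly the structure needed to compute the $L^{2}$ pairing.

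For the off-diagonal case $l\neq j$, the quickest route is self-adjointness: by the remark following Theorem~\ref{addthm}, $x\mapsto P_{l,n}(x\cdot y)$ is an eigenfunction of $\DEL_{S^{n}}$ with eigenvalue $\lambda_{l}=l(l+n-1)$, and since $\lambda_{l}\neq\lambda_{j}$ for $l\neq j$ while $\DEL_{S^{n}}$ is self-adjoint on $L^{2}(S^{n})$, the two eigenfunctions are orthogonal and the integral vanishes. Alternatively, and more in keeping with the addition-theorem approach, I would write the integrand as $P_{l,n}(x\cdot y)\,\overline{P_{j,n}(x\cdot y)}$ (the kernel is real), substitute the zonal expansion of each factor, interchange sum and integral, and invoke the cross-degree orthogonality $\int_{S^{n}}\psi_{l,a}(x)\bar{\psi}_{j,b}(x)\,dV_{n}(x)=0$ for $l\neq j$, which annihilates every term.

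For the diagonal case $l=j$, I would use $P_{l,n}(x\cdot y)^{2}=P_{l,n}(x\cdot y)\,\overline{P_{l,n}(x\cdot y)}$ and insert the zonal expansion into each factor to obtain a double sum; integrating in $x$ and applying orthonormality \eqref{gph10a} collapses it to the diagonal, giving $\int_{S^{n}}P_{l,n}(x\cdot y)^{2}\,dV_{n}(x)=\big(\tfrac{|S^{n}|}{d_{l}(n)}\big)^{2}\sum_{a}|\psi_{l,a}(y)|^{2}$. Now I would evaluate the addition theorem on the diagonal $x=y$: since $y\cdot y=1$ and $P_{l,n}(1)=1$, \eqref{gph11} yields $\sum_{a}|\psi_{l,a}(y)|^{2}=\frac{d_{l}(n)}{|S^{n}|}$, independent of $y$. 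Multiplying the two constants gives $\frac{|S^{n}|}{d_{l}(n)}$, as claimed.

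The main obstacle is bookkeeping rather than conceptual, and it splits into two points. First, the index carried by the zonal polynomial in \eqref{gph11} must be reconciled with the symbol $P_{l,n}$ of the statement, i.e. I must confirm that the Legendre polynomial normalised by $P_{l,n}(1)=1$ is precisely the reproducing kernel of $\mathcal{H}_{l}(S^{n})$ scaled by $d_{l}(n)/|S^{n}|$; this is what legitimises the two diagonal evaluations above. Second, I must record that the value of the integral is genuinely independent of $y$: this is transparent from the computation, since the $y$-dependence cancels through $\sum_{a}|\psi_{l,a}(y)|^{2}=d_{l}(n)/|S^{n}|$, but it can equally be seen a priori from the rotation invariance of $dV_{n}$ together with the invariance of $x\cdot y$ under the diagonal action of the rotation group on $S^{n}$.
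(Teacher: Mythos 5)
Your proposal is correct and follows essentially the same route as the paper's proof: expand the zonal kernel via the addition theorem \eqref{gph11}, use the orthonormality \eqref{gph10a} (and cross-degree orthogonality) to kill the off-diagonal case and collapse the double sum in the diagonal case, then evaluate $\sum_{a}|\psi_{l,a}(y)|^{2}=d_{l}(n)/|S^{n}|$ from the addition theorem at $x=y$ with $P_{l,n}(1)=1$. The only differences are presentational — you make explicit the diagonal evaluation and the index reconciliation that the paper leaves implicit, and you offer a self-adjointness/eigenvalue alternative for the case $l\neq j$ — but the core argument is identical.
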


\begin{proof}
That 
\[\int _{S^{n}} P_{l,n}(x \cdot y)P_{j,n}(x \cdot y) dV_{n}(x) =0 ~~ \text{for} ~~ l \neq j \]
follows directly from \eqref{gph10a}.
When $l=j$ we observe from  (\ref{gph11}) that 
\begin{eqnarray*}
\int _{S^{n}} |P_{l,n}(x \cdot y)|^{2} dV_{g}(y) & = &
\big(\frac{|S^{n}|}{d_{l}(n)} \big)^{2} \int _{S^{n}} \big| \sum _{j = 1}^{d_{l}(n)} 
\psi _{l,j}(x) \bar{\psi }_{l,m}(y) \big|^{2}  dV_{g}(y) \\
&=& \big(\frac{|S^{n}|}{d_{l}(n)} \big)^{2}  \sum _{j = 1}^{d_{l}(n)} 
\big| \psi _{l,j}(x)  \big|^{2} 
= \big(\frac{|S^{n}|}{d_{l}(n)} \big)^{2} \frac{d_{l}(n)}{|S^{n}|} .
\end{eqnarray*}
\end{proof}

We also have 
\begin{equation}\label{Rint2}
\int _{-1}^{1}P_{l,n}(t) P_{j,n}(t)(1-t^{2})^{\frac{n-3}{2}}dt  = 
\frac{|S^{n-1}|}{d_{l}(n)|S^{n-2}|}\delta_{lj} .
\end{equation}

Besides, that 
\[\int _{S^{n-1}} P_{l,n}( \xi \cdot \eta )  P_{j,n}( \xi \cdot \eta ) dS_{n-1} (\xi) = 
\left\{\begin{array}{rcl}
0 ~~~ &=& \text{if} ~~ l \neq j \\
\frac{|S^{n-1}|}{d_{l}(n)} &=& \text{if} ~~ l = j
\end{array}\right. \]
follows from the following. Take $t = \xi \cdot \eta$ and observe that 
\[\int _{-1}^{1} |P_{l,n}(t)|^{2} (1-t^{2})^{\frac{n-3}{2}} dt 
= \frac{|S^{n-1}|}{d_{l}(n) |S^{n-2}|} .\]
So, 
\[\int _{S^{n-1}}\Big( \int _{-1}^{1} P_{l,n}(t) P_{j,n}(t) 
(1-t^{2})^{\frac{n-3}{2}} dt\Big) dS_{n-2} = |S^{n-2}|
\int _{-1}^{1} P_{l,n}(t) P_{j,n}(t) 
(1-t^{2})^{\frac{n-3}{2}} dt \]
where
\[\int _{-1}^{1} P_{l,n}(t) P_{j,n}(t) 
(1-t^{2})^{\frac{n-3}{2}} dt  = 0  ~~ \text{for}~~ l \neq j .\]

Another interesting  property is the projection of integrable function onto the space 
of spherical harmonics on the hypersphere. We make the following definition. 
\begin{definition}\label{defproj}
The projection of $f \in L^{1}(S^{n-1})$ into  $\mathcal{Y}_{l,n}$ is 
\begin{equation}\label{approx4}
(F_{l,n}f)(\xi ) := \frac{d_{l}(n)}{|S^{n-1}|} \int _{S^{n-1}}   P_{l,n} (\xi \cdot \eta)
f(\eta)dS_{n-1}(\eta), ~~ \eta \in S^{n-1}.
\end{equation}
\end{definition}

Let us now consider the problem of finding the best approximation for 
$f \in L^{2} (S^{n-1}).$  That is, 
\begin{equation}\label{approx2}
\inf _{Y_{n} \in \mathcal{Y}_{l,n}} || f - Y_{l} ||_{L^{2}(S^{n-1})} .
\end{equation}
In terms of the orthonormal $\{ Y_{l,j} : 1 \leq j \leq  d_{l}(n) \}$ 
of  $\mathcal{Y}_{l,n},$ the solution to (\ref{approx2}) is 
\begin{equation}\label{approx3}
(F_{l,n}f)(\xi ) = \sum _{j=1}^{d_{l}(n)}  \lka f , Y_{l,j}  \rka Y_{l,j}(\xi ) .
\end{equation}
The operator $F_{l,n}$ is the projection of any $f$ into $\mathcal{Y}_{l,n}$ and it is defined 
for $f \in L^{1}(S^{n-1}).$ One can apply this formula without explicit knowledge 
of the orthonormal basis by applying the addition lemma \eqref{addt}.

\begin{lemma}
If $f \in L^{2}(S^{n-1})$  then for any $\xi \in S^{n-1}$ we have
\[ || (F_{l,n}f) ||_{L^{2}(S^{n-1})} \leq ||f||_{L^{2}(S^{n-1})} .\]
\end{lemma}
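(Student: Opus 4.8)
The plan is to exploit the explicit series representation \eqref{approx3} together with the orthonormality of the basis $\{Y_{l,j}: 1 \leq j \leq d_{l}(n)\}$ of $\mathcal{Y}_{l,n}$, thereby reducing the claim to (a self-contained proof of) Bessel's inequality. First I would record that, for $f \in L^{2}(S^{n-1})$, the coefficients $c_{j} := \lka f , Y_{l,j} \rka$ are well-defined finite scalars, since $L^{2}(S^{n-1}) \subset L^{1}(S^{n-1})$ on the compact manifold $S^{n-1}$, and the sum
\[
(F_{l,n}f)(\xi ) = \sum _{j=1}^{d_{l}(n)}  c_{j}\, Y_{l,j}(\xi )
\]
is finite, having only $d_{l}(n)$ terms; in particular $F_{l,n}f \in \mathcal{Y}_{l,n} \subset L^{2}(S^{n-1})$.

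Next I would compute the squared norm directly from the inner product \eqref{ips} and the orthonormality relation $\lka Y_{l,j} , Y_{l,m} \rka = \delta_{jm}$, obtaining
\[
\| F_{l,n}f \|^{2}
= \Big\lka \sum_{j} c_{j} Y_{l,j} ,\ \sum_{m} c_{m} Y_{l,m} \Big\rka
= \sum_{j,m} c_{j} \bar{c}_{m}\, \delta_{jm}
= \sum_{j=1}^{d_{l}(n)} |c_{j}|^{2}.
\]
The same computation, applied to $\lka f , F_{l,n}f \rka$, gives the key projection identity $\lka f , F_{l,n}f \rka = \sum_{j} |c_{j}|^{2} = \| F_{l,n}f \|^{2}$, which expresses that $F_{l,n}$ is self-adjoint and idempotent.

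Finally I would invoke Bessel's inequality, proved from scratch by expanding the manifestly nonnegative quantity $\| f - F_{l,n}f \|^{2}$. Using the projection identity and the fact that $\sum_{j}|c_{j}|^{2}$ is real,
\[
0 \leq \| f - F_{l,n}f \|^{2}
= \| f \|^{2} - 2\,\Re \lka f , F_{l,n}f \rka + \| F_{l,n}f \|^{2}
= \| f \|^{2} - \| F_{l,n}f \|^{2},
\]
whence $\| F_{l,n}f \|_{L^{2}(S^{n-1})} \leq \| f \|_{L^{2}(S^{n-1})}$, as claimed.

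The main (in fact only) obstacle is bookkeeping rather than analysis: one must confirm that $F_{l,n}f$ genuinely lies in $L^{2}(S^{n-1})$ and that the self-adjoint/idempotent identity $\lka f , F_{l,n}f \rka = \| F_{l,n}f \|^{2}$ holds. Both follow immediately from the orthonormality computation, so no genuine difficulty arises; the estimate is an equality precisely when $f \in \mathcal{Y}_{l,n}$, i.e. when $f$ equals its own projection.
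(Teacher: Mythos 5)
Your proof is correct, but it follows a genuinely different route from the paper's. The paper works from the integral representation \eqref{approx4}: it applies the Cauchy--Schwarz inequality to the kernel $P_{l,n}(\xi\cdot\eta)$ and (implicitly using Lemma \ref{addt}) obtains $\| F_{l,n}f \|_{L^{2}(S^{n-1})} \leq \sqrt{d_{l}(n)}\, \|f\|_{L^{2}(S^{n-1})}$, then a ``similar'' bound with constant $\sqrt{d_{l}(n)/|S^{n-1}|}$, and finally asserts that these two bounds together imply the stated inequality with constant $1$. That last step is a non sequitur: since $d_{l}(n)\to\infty$ as $l\to\infty$ while $|S^{n-1}|$ is a fixed constant, both constants exceed $1$ for all large $l$, so the paper's kernel estimate never actually delivers the constant $1$. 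Your argument avoids this entirely: you expand in the orthonormal basis via \eqref{approx3}, compute $\|F_{l,n}f\|^{2}=\sum_{j=1}^{d_{l}(n)}|c_{j}|^{2}=\lka f, F_{l,n}f\rka$, and expand the nonnegative quantity $\|f-F_{l,n}f\|^{2}$ --- the standard Bessel/orthogonal-projection argument. What your route buys is precisely what the paper's lacks: the sharp constant $1$ (with the equality case $f\in\mathcal{Y}_{l,n}$ characterized), obtained from orthogonality rather than from pointwise kernel bounds, which can only ever give $l$-dependent constants. One cosmetic remark: the appeal to $L^{2}(S^{n-1})\subset L^{1}(S^{n-1})$ is unnecessary for finiteness of the coefficients $c_{j}=\lka f, Y_{l,j}\rka$, which follows directly from Cauchy--Schwarz in $L^{2}$; this does not affect the validity of your proof.
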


\begin{proof}
Let $f \in L^{2}(S^{n-1})$  and  $\xi \in S^{n-1}$ given,  we have 
\begin{eqnarray*}
|(F_{l,n}f)(\xi )|^{2} &\leq & \frac{d_{l}(n)}{|S^{n-1}|} \int _{S^{n-1}}  
| P_{l,n} (\xi \cdot \eta)|^{2} d S_{n-1}(\eta)\cdot \int _{S^{n-1}}
|f(\eta)|^{2}dS_{n-1}(\eta)  \\
\implies || (F_{l,n}f) ||_{L^{2}(S^{n-1})} &\leq & \sqrt{d_{l}(n)} ||f||_{L^{2}(S^{n-1})} .
\end{eqnarray*}
Similarly, 
\[ || (F_{l,n}f) ||_{L^{2}(S^{n-1})} \leq  \sqrt{\frac{d_{l}(n)}{|S^{n-1}|}} 
||f||_{L^{2}(S^{n-1})} .\]
These imply that 
\[ || (F_{l,n}f) ||_{L^{2}(S^{n-1})} \leq ||f||_{L^{2}(S^{n-1})} .\]
\end{proof}

All these give rise to the following theorem. 
\begin{theorem}
We have the orthogonal decomposition 
\[ L^{2}(S^{n-1}) = \bigoplus _{l=0}^{\infty} \mathcal{Y}_{l,n} . \]
\end{theorem}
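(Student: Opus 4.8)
The plan is to reduce the statement to two separate facts---pairwise orthogonality of the summands and density of their algebraic span in $L^2(S^{n-1})$---and then to assemble them. Orthogonality is essentially already available. Each space $\mathcal{Y}_{l,n}$ is finite dimensional, of dimension $d_l(n)$, hence closed; and for $l \neq j$ the subspaces $\mathcal{Y}_{l,n}$ and $\mathcal{Y}_{j,n}$ are mutually orthogonal in $L^2(S^{n-1})$. This last point follows from Lemma \ref{addt} together with the orthonormality relation \eqref{gph10a}, or, more conceptually, from the observation that $\mathcal{Y}_{l,n}$ is the eigenspace of the Laplacian $\Delta$ for the eigenvalue $\lambda_l = l(l+n-1)$: eigenspaces of a self-adjoint operator attached to distinct eigenvalues are automatically orthogonal. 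Consequently $\bigoplus_{l=0}^{\infty}\mathcal{Y}_{l,n}$ is a genuine orthogonal (Hilbert) direct sum sitting inside $L^2(S^{n-1})$, and the content of the theorem is that its closure exhausts the whole space.

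For density I would proceed in three moves. The first, and algebraically the key one, is the classical harmonic decomposition of polynomials. Writing $\mathcal{P}_k$ for the homogeneous polynomials of degree $k$ in the $n$ variables $x_1,\dots,x_n$, and $\mathcal{H}_{k,n}$ for the homogeneous harmonic polynomials of degree $k$ (those whose restriction to $S^{n-1}$ constitutes $\mathcal{Y}_{k,n}$), one has the splitting $\mathcal{P}_k = \mathcal{H}_{k,n} \oplus |x|^2 \mathcal{P}_{k-2}$. Iterating this relation gives $\mathcal{P}_k = \bigoplus_{0 \le 2j \le k} |x|^{2j}\mathcal{H}_{k-2j,n}$, and restricting to the sphere, where $|x| = 1$, shows that the restriction to $S^{n-1}$ of every homogeneous---hence every---polynomial is a finite linear combination of hyperspherical harmonics. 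In other words, the algebraic span of $\bigcup_l \mathcal{Y}_{l,n}$ is exactly the restriction to $S^{n-1}$ of the polynomial algebra in $x_1,\dots,x_n$.

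The second move is the Stone--Weierstrass theorem. The restrictions of polynomials to $S^{n-1}$ form a subalgebra of $C(S^{n-1})$ that contains the constants and separates points, since the coordinate functions $x_1,\dots,x_n$ already separate points of the sphere; hence this subalgebra is uniformly dense in $C(S^{n-1})$. The third move transfers this to the Hilbert space: because $S^{n-1}$ carries finite surface measure, uniform convergence implies convergence in $\|\cdot\|_2$, so the span of $\bigcup_l \mathcal{Y}_{l,n}$ is $\|\cdot\|_2$-dense in $C(S^{n-1})$; and since $L^2(S^{n-1})$ is by definition the $\|\cdot\|_2$-closure of $C(S^{n-1})$, this span is dense in $L^2(S^{n-1})$. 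Orthogonality of the summands together with density of their span is precisely the assertion that $L^2(S^{n-1}) = \bigoplus_{l=0}^{\infty}\mathcal{Y}_{l,n}$.

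The step I expect to require genuine work is the harmonic splitting $\mathcal{P}_k = \mathcal{H}_{k,n} \oplus |x|^2\mathcal{P}_{k-2}$, equivalently the surjectivity of $\Delta : \mathcal{P}_k \to \mathcal{P}_{k-2}$. The cleanest route is an adjointness argument on $\mathcal{P}_k$: equip it with the Fischer inner product, in which multiplication by $|x|^2$ is the formal adjoint of $\Delta$. Then $\mathcal{P}_k$ splits orthogonally as $\ker(\Delta|_{\mathcal{P}_k}) \oplus \operatorname{ran}(|x|^2\,\cdot)$, and the kernel is by definition $\mathcal{H}_{k,n}$; this simultaneously yields the surjectivity of $\Delta$ and the direct-sum decomposition. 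Everything downstream---Stone--Weierstrass and the passage from uniform to $L^2$ density---is standard and uses nothing about $S^{n-1}$ beyond the finiteness of its surface measure recorded in \eqref{surf2}.
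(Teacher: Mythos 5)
Your proposal is correct, but it is worth saying plainly that it does considerably more than the paper does: the paper contains no proof of this theorem at all. The theorem is stated immediately after the orthogonality results (Lemma \ref{addt}, \eqref{Rint2}) and the norm bound on the projections $F_{l,n}$, with only the sentence ``All these give rise to the following theorem'' as justification. Those preliminaries establish exactly one half of the statement, the half you dispatch first: the spaces $\mathcal{Y}_{l,n}$ are pairwise orthogonal and finite dimensional, hence closed, so their orthogonal direct sum embeds isometrically as a closed subspace of $L^{2}(S^{n-1})$. They say nothing about the other half --- that no nonzero element of $L^{2}(S^{n-1})$ is orthogonal to every $\mathcal{Y}_{l,n}$ --- and that density assertion is the genuine content of the theorem; orthogonality alone can never rule out a nonzero orthogonal complement. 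Your three moves supply precisely this missing half by the classical route: the Fischer-inner-product proof of the splitting $\mathcal{P}_{k}=\mathcal{H}_{k,n}\oplus|x|^{2}\mathcal{P}_{k-2}$, iterated and restricted to the sphere so that every polynomial restriction lies in the algebraic span of the $\mathcal{Y}_{l,n}$; Stone--Weierstrass for uniform density of polynomial restrictions in $C(S^{n-1})$ (the coordinate functions separate points, the algebra contains the constants and is conjugation-closed); and the passage from uniform density to $\|\cdot\|_{2}$ density via the finiteness of the surface measure \eqref{surf2}, together with the paper's own definition of $L^{2}(S^{n-1})$ as the $\|\cdot\|_{2}$-closure of $C(S^{n-1})$. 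In short, where the paper gestures, you prove; nothing in the paper's preceding lemmas could substitute for your density step.

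Two small remarks that do not affect validity. First, on $S^{n-1}$ the eigenvalue of the Laplace--Beltrami operator on degree-$l$ spherical harmonics is $l(l+n-2)$, not $l(l+n-1)$ (the latter is the $S^{n}$ value; the paper itself mixes the two conventions). Your argument only needs that distinct degrees give distinct eigenvalues, which remains true. Second, the relation \eqref{gph10a} as stated in the paper is an orthonormality relation within a single degree $l$, so it does not by itself give cross-degree orthogonality; your alternative justification --- that eigenspaces of a self-adjoint operator attached to distinct eigenvalues are orthogonal --- is the solid one, and is the argument to lean on.
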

This means that any $f \in L^{2}(S^{n-1})$ can be uniquely represented as 
\begin{equation}\label{dec1}
f(\xi)= \sum _{l=0}^{\infty}f_{l}(\xi) ; ~~ \text{with}~~ f_{l} \in \mathcal{Y}_{l,n}; ~ l \geq 0.
\end{equation}
We call  $f_{l} \in \mathcal{Y}_{l,n}$ hyperspherical component of $f$ given by 
\begin{equation}\label{dec2}
f_{l}(\xi)= \frac{d_{l}(n)}{|S^{n-1}|} \int _{S^{n-1}} 
f(\eta)P_{l,n}(\xi \cdot \eta)dS_{n-1}(\eta);  ~~~ \eta \geq 0.
\end{equation}
We now define another generalisation of the hyperspherical harmonics. 
\begin{definition}
For $\alpha > 0 ,$ $l \in \mathbb{N}_{0}$, the Gegenbauer polynomial of degree $l$ with index 
$\alpha$ is defined by 
\begin{equation}\label{gege1}
C_{l}^{\alpha} (t):= \binom{l+2 \alpha -1}{l} 
\frac{\Gamma (\alpha + 1/2)}{\sqrt{\pi}\Gamma (\alpha)} 
\int _{-1}^{1}(t + i(1-t^{2})^{l/2} s)^{l} (1-s^{2})^{\alpha -1} ds .
\end{equation}
\end{definition}
\begin{theorem}
The Gegenbauer function $C_{l}^{\alpha}$ is indeed a polynomial and has a representation in 
terms of the hyperspherical harmonics as 
\begin{equation}\label{gege2}
C_{l}^{\frac{n-2}{2}} (t):= \binom{l+n -3}{l} P_{l,n}(t);  ~~~ \text{for}~~~ n \geq 3.
\end{equation}
\end{theorem}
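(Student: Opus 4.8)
The plan is to specialise the index to $\alpha = \frac{n-2}{2}$ and to recognise that, with this choice, the integral appearing in the definition \eqref{gege1} is precisely the one occurring in the integral representation \eqref{hsi1} of $P_{l,n}$. The whole proof then splits into two independent tasks: first, exhibiting $C_l^{\alpha}$ as a genuine polynomial directly from its integral definition; and second, a bookkeeping computation that tracks the normalising constants and the surface-area ratio so that they collapse to the single binomial factor $\binom{l+n-3}{l}$.

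For polynomiality I would expand the integrand by the binomial theorem,
\[
\big(t + i(1-t^2)^{1/2}s\big)^l = \sum_{k=0}^l \binom{l}{k} i^k t^{l-k}(1-t^2)^{k/2} s^k,
\]
and integrate term by term against $(1-s^2)^{\alpha-1}\,ds$ on $[-1,1]$. By parity every odd power $s^k$ integrates to zero, while for $k=2j$ the surviving moment is a Beta integral,
\[
\int_{-1}^1 s^{2j}(1-s^2)^{\alpha-1}\,ds = \frac{\Gamma(j+\tfrac12)\,\Gamma(\alpha)}{\Gamma(j+\alpha+\tfrac12)},
\]
and $i^{2j}(1-t^2)^{j} = (-1)^j(1-t^2)^j$. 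Hence the integral is a finite linear combination of the monomial blocks $(1-t^2)^j t^{l-2j}$ with $0 \le j \le [l/2]$, each a polynomial in $t$; so $C_l^{\alpha}$ is a polynomial of degree $l$.

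For the identity I would set $\alpha = \frac{n-2}{2}$, so that $2\alpha - 1 = n-3$, $\alpha - 1 = \frac{n-4}{2}$ and $\alpha + \frac12 = \frac{n-1}{2}$. With these substitutions the integral in \eqref{gege1} is identical to the one in \eqref{hsi1}, whence
\[
\int_{-1}^1 \big(t + i(1-t^2)^{1/2}s\big)^l (1-s^2)^{\frac{n-4}{2}}\,ds = \frac{|S^{n-2}|}{|S^{n-3}|}\,P_{l,n}(t).
\]
Using the closed form \eqref{surf2}, $|S^{m-1}| = 2\pi^{m/2}/\Gamma(m/2)$, the surface-area ratio simplifies to $|S^{n-2}|/|S^{n-3}| = \sqrt{\pi}\,\Gamma\!\big(\tfrac{n-2}{2}\big)/\Gamma\!\big(\tfrac{n-1}{2}\big)$. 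Multiplying by the prefactor $\binom{l+n-3}{l}\Gamma\!\big(\tfrac{n-1}{2}\big)/\big(\sqrt{\pi}\,\Gamma\!\big(\tfrac{n-2}{2}\big)\big)$ of \eqref{gege1}, the factors $\sqrt{\pi}$, $\Gamma\!\big(\tfrac{n-1}{2}\big)$ and $\Gamma\!\big(\tfrac{n-2}{2}\big)$ cancel exactly, leaving $C_l^{(n-2)/2}(t) = \binom{l+n-3}{l}P_{l,n}(t)$, as claimed.

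The main obstacle is really a matter of reconciling the statement with \eqref{hsi1}: the exponent $(1-t^2)^{l/2}$ written in \eqref{gege1} must be read as $(1-t^2)^{1/2}$, since otherwise the even-$k$ terms would produce blocks $(1-t^2)^{lj}t^{l-2j}$ of the wrong degree and the polynomial could not coincide with $P_{l,n}$. With that reading understood, the computation is routine. As an independent check --- avoiding any appeal to \eqref{hsi1} --- one may instead compare the Beta-integral series obtained above directly with the explicit polynomial series \eqref{Lpoly}, using $\Gamma(j+\tfrac12) = \frac{(2j)!}{4^j j!}\sqrt{\pi}$ together with $\binom{l}{2j}=\frac{l!}{(2j)!(l-2j)!}$; the same cancellation of $\Gamma$-factors then reproduces \eqref{Lpoly} term by term, up to the overall constant $\binom{l+n-3}{l}$.
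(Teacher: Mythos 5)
Your proof is correct and takes essentially the same route as the paper's: binomial expansion plus a parity argument for polynomiality, followed by matching the integral in \eqref{gege1} (with $\alpha=\tfrac{n-2}{2}$) against the integral representation \eqref{hsi1} of $P_{l,n}$. You are in fact more complete than the paper, which disposes of the identity with the single remark that it ``follows from \eqref{hsi1}'' and never performs the cancellation of $\sqrt{\pi}$, $\Gamma\big(\tfrac{n-1}{2}\big)$, $\Gamma\big(\tfrac{n-2}{2}\big)$ and the surface-area ratio $|S^{n-2}|/|S^{n-3}|$ that you carry out explicitly; you also correctly read the exponent $(1-t^{2})^{l/2}$ in \eqref{gege1} as a typo for $(1-t^{2})^{1/2}$, exactly as the paper's own expansion silently does.
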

\begin{proof}
From the definition of $C_{l}^{\alpha}$ in \eqref{gege1}, we observe that by Binomial expansion 
\[ [t+ i(1-t^{2})^{\frac{1}{2}}s]^{l}  = \sum _{j=0}^{l}\binom{l}{j} 
t^{l-j}(1-t^{2})^{\frac{j}{2}}(is)^{j} .\]
So for $j=2k+1$ odd, the integral corresponding term is 
\[ \int _{-1}^{1} s^{2k+1}(1-s^{2})^{\alpha -1} ds = 0. \]
Thus, $C_{l}^{\alpha}$ is real valued and has the form 
\[ C_{l}^{\alpha} (t)= \binom{l+2 \alpha -1}{l} \frac{\Gamma (\alpha +1/2)}{\sqrt{\pi}
\Gamma (\alpha)}
\sum _{k=0}^{[\frac{l}{2}]}\binom{l}{2k} 
t^{l-2k} (-1)^{k}(1-t^{2})^{k} \int _{-1}^{1}s^{2k}(1-s^{2})^{\alpha-1} ds \]
which is a polynomial of degree less or equal to $l.$ 

Besides, the coefficient of $t^{l}$ in $C_{l}^{\alpha}(t)$ is 
\[\binom{l+2 \alpha -1}{l} \frac{\Gamma (\alpha +1/2)}{\sqrt{\pi}
\Gamma (\alpha)} \sum _{k=0}^{[\frac{l}{2}]}\binom{l}{2k} 
 \int _{-1}^{1}s^{2k}(1-s^{2})^{\alpha-1} ds .\]
 Therefore, $C_{l}^{\alpha}(t)$ is a polynomial.
 
Moreover, from \eqref{hsi1}, it follows that $C_{l}^{\alpha}(t)$ has the form \eqref{gege1}.
\end{proof}

We remark the following standard result. 
\begin{lemma}\label{gph70}
The Gegenbauer polynomials $C_{l}^{(n-1)/2} (\xi )$ are eigenfunctions of the Laplace-Beltrami 
operator on $S^{n-1}$ which satisfy the following eigenvalue problem: 
\begin{equation}\label{gph7}
(1-\xi ^{2})(C_{l}^{(n-1)/2})^{\prime \prime}(\xi ) - n \xi (C_{l}^{(n-1)/2})^{\prime} (\xi )
 + l(l + n - 1)C_{l}^{(n-1)/2}(\xi ) = 0.
\end{equation}
\end{lemma}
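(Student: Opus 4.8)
The identity \eqref{gph7} to be proved is exactly the ultraspherical (Gegenbauer) differential equation of index $\alpha=\tfrac{n-1}{2}$: with $2\alpha+1=n$ and $l(l+2\alpha)=l(l+n-1)$, the classical equation $(1-t^{2})y''-(2\alpha+1)t\,y'+l(l+2\alpha)y=0$ is precisely \eqref{gph7}. The plan is to derive it not by brute differentiation of the polynomial but by separating variables in the Laplace--Beltrami eigenvalue equation on the sphere, using that $C_{l}^{(n-1)/2}$ is---via \eqref{gege2} applied in dimension $n+1$, so that $C_{l}^{(n-1)/2}(t)=\binom{l+n-2}{l}P_{l,n+1}(t)$---a constant multiple of the zonal harmonic $P_{l,n+1}$. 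As a function on $S^{n}$ through $\cos\theta=\xi\cdot\eta$, this zonal harmonic has Laplace eigenvalue $l(l+n-1)$ by the addition theorem (Theorem~\ref{addthm}) and the remark following it.

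First I would restrict the operator \eqref{gphe} to functions of the polar angle $\theta$ alone. For such zonal functions the tangential term $\tfrac{1}{\sin^{2}\theta}\DEL_{S^{n-1}}$ annihilates the function, leaving only the polar part $\tfrac{1}{\sin^{n-1}\theta}\,\partial_{\theta}(\sin^{n-1}\theta\,\partial_{\theta})$. I would then substitute $t=\cos\theta$, so that $\partial_{\theta}=-\sin\theta\,\tfrac{d}{dt}$ and $\sin^{2}\theta=1-t^{2}$. A direct chain-rule computation collapses the polar operator: the factor $\sin^{n-1}\theta=(1-t^{2})^{(n-1)/2}$ combines with the Jacobian factors so that every fractional power of $1-t^{2}$ cancels, and one is left with the second-order operator $(1-t^{2})\tfrac{d^{2}}{dt^{2}}-nt\,\tfrac{d}{dt}$ in the variable $t$.

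Finally I would feed this reduced operator the spectral data. The degree-$l$ zonal harmonic $u(\theta)=C_{l}^{(n-1)/2}(\cos\theta)$ satisfies $\DEL_{S^{n}}u=-l(l+n-1)u$ for the Laplace--Beltrami operator \eqref{gphe}; equivalently $-\DEL_{S^{n}}$ has the eigenvalue $\lambda_{l}=l(l+n-1)$ recorded after Theorem~\ref{addthm}. Substituting the reduction gives $(1-t^{2})u''-nt\,u'=-l(l+n-1)u$, i.e. $(1-t^{2})u''-nt\,u'+l(l+n-1)u=0$, which is \eqref{gph7} after renaming $t$ as $\xi$. I expect the only real obstacle to be bookkeeping: tracking the powers of $\sin\theta$ through the chain rule so that the first-order coefficient emerges as exactly $-nt$ (rather than $-(n\pm1)t$), and keeping the dimensional conventions straight---the index $\alpha=(n-1)/2$ and eigenvalue $l(l+n-1)$ belong to the $n$-sphere $S^{n}$, consistent with the harmonic statement after Theorem~\ref{addthm} despite the lemma's heading referring to $S^{n-1}$, with the sign of the eigenvalue taken as $-l(l+n-1)$ for the operator \eqref{gphe} so as to land the correct $+l(l+n-1)$ in \eqref{gph7}.

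As a fully self-contained alternative I would instead verify \eqref{gph7} from the Rodrigues formula \eqref{Rodrf}. Writing $u(t)=(1-t^{2})^{\,l+(n-2)/2}$, one has the first-order relation $(1-t^{2})u'+(2l+n-2)t\,u=0$; differentiating it $l+1$ times by the Leibniz rule and simplifying produces the Sturm--Liouville form $\tfrac{d}{dt}\big[(1-t^{2})^{n/2}y'\big]+l(l+n-1)(1-t^{2})^{(n-2)/2}y=0$ for $y=P_{l,n+1}\propto C_{l}^{(n-1)/2}$, which reduces to \eqref{gph7} upon dividing by $(1-t^{2})^{(n-2)/2}$. In this route the main labour is the repeated Leibniz differentiation, but it avoids any appeal to the geometry of the sphere.
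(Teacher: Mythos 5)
Your first route is essentially the paper's own proof: the paper likewise treats $C_{l}^{(n-1)/2}(\cos\theta)$ as a zonal harmonic of degree $l$ on $S^{n}$, notes that the tangential term $\frac{1}{\sin^{2}\theta}\DEL_{S^{n-1}}$ annihilates a function of the polar angle alone, and reduces the polar part of \eqref{gphe} under $t=\cos\theta$ to $(1-\xi^{2})\frac{d^{2}}{d\xi^{2}}-n\xi\frac{d}{d\xi}$ with eigenvalue $l(l+n-1)$, exactly as you propose. Your explicit bookkeeping of the sign of the eigenvalue and of the $S^{n}$-versus-$S^{n-1}$ labelling is warranted (the paper's write-up is loose on both points), and the Rodrigues-formula alternative is a sound extra route but not the one the paper takes.
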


\begin{proof}
Let $\psi \in  C^{\infty}(S^{n}).$  Then,  
\[\DEL _{S^{n}} \psi  =  \frac{1}{\sin ^{n -1} \theta } \frac{\partial }{\partial \theta}
\big( \sin ^{n -1}   \frac{\partial \psi }{\partial \theta } \big )  +
 \frac{1}{\sin ^{2} \theta}
 \DEL _{S^{n-1}} \psi . \]
For a fixed $\xi  \in S^{n},$ the function $\psi$ given by  
$C_{l}^{n}(\xi  \cdot \eta) = C_{l}^{{(n-1)/2}}(\cos \theta )$ belongs to 
$\mathcal{H}_{l}(S^{n})$ where $\theta$ is the geodesic angle between 
$\xi $ and $\eta \in S^{n} .$ 
A direct computation gives  
\begin{equation*}
 \DEL _{S^{n}} \psi   = l(l + n -1)\psi .
 \end{equation*}

Since $\psi$ only depends on $\xi ,$ so,
$\DEL _{S^{n-1}} \psi = 0 $  and thus 
\begin{equation}\label{col9}
 l(l + n- 1)C_{l}^{n}(\xi ) = \DEL _{S^{n}}C_{l}^{n}(\xi ) = 
(1 - \xi ^{2})\frac{\partial ^{2}C_{l}^{n}(\xi ) }{ \partial t^{2}} 
- n \xi \frac{\partial C_{l}^{n}(\xi ) }{ \partial \xi } 
\end{equation}
proving the theorem.
\end{proof}

\section{Spherical harmonic analysis and filtration}
Consider the following generating function of the Gegenbauer harmonic \eqref{gege1} for a fixed 
index $\alpha$:
\begin{equation}\label{n1}
\Phi (r) = \sum _{l=0}^{\infty} C_{l}^{\alpha}(t) r^{l}= \frac{1}{(1+r^{2}-2rt)^{(\frac{n-2}{2})}}
 ~~ ~~ \text{for} ~~ |r|<1,~~  |t| \leq 1.
\end{equation}
Consequently from \eqref{gege2}, 
\begin{equation}\label{n2}
\Phi (r) = \sum _{l=0}^{\infty}\binom{l+n-3}{n-3} P_{l,n}(t) r^{l}
= \frac{1}{(1+r^{2}-2rt)^{(\frac{n-2}{2})}}
 ~~ ~~ \text{for} ~~ |r|<1, ~~ |t| \leq 1.
\end{equation}
\begin{definition}
A Gegenbauer filtration kernel is  
\begin{equation}\label{n3}
G_{n} (r,t) := \frac{|S^{n-2}|}{|S^{n-1}|}\frac{1-r^{2}}{(1+r^{2}-2rt)^{(\frac{n}{2})}}
 ~~ ~~ \text{for}~~ |r|<1,~~  |t| \leq 1 ~~~ \text{and} ~~~ n \geq 3.
\end{equation}
\end{definition}
\begin{lemma}\label{filt1}
The function \eqref{n3} satisfies the following properties 
\begin{equation}\label{n4}
\int_{-1}^{1}G_{n} (r,t) (1-t^{2})^{\frac{n-3}{2}} dt = 1
\end{equation}
and 
\begin{equation}\label{n5}
\lim _{r \rightarrow 1^{-}} G_{n} (r,t) =0.
\end{equation}
\end{lemma}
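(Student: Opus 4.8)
The plan is to treat the two assertions separately, with essentially all the work going into the normalisation \eqref{n4}; the vanishing \eqref{n5} is a one-line consequence of the explicit formula. The guiding picture is that $G_n(r,\cdot)$ is an approximate identity as $r\to1^-$: \eqref{n4} says its total weight is constantly $1$, while \eqref{n5} records that this mass escapes to the single point $t=1$. Accordingly I would first observe that for fixed $t\in[-1,1)$ the denominator $(1+r^2-2rt)^{n/2}$ tends to $(2(1-t))^{n/2}>0$ as $r\to1^-$, whereas the numerator $1-r^2\to0$; since the prefactor $|S^{n-2}|/|S^{n-1}|$ is constant, \eqref{n5} follows at once. (At $t=1$ the kernel in fact blows up, which is precisely the concentration phenomenon, so \eqref{n5} is to be read for $t<1$.)

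For \eqref{n4} the main step is to expand the kernel as a Gegenbauer series, which I would derive directly from the generating function \eqref{n1} rather than quote a Poisson-kernel identity. Writing $\Phi(r)=(1+r^2-2rt)^{-(n-2)/2}$ and differentiating in $r$ gives $\Phi'(r)=-(n-2)(r-t)(1+r^2-2rt)^{-n/2}$; combining this with the algebraic identity $1-r^2=(1+r^2-2rt)-2r(r-t)$ yields
\begin{equation*}
\frac{1-r^2}{(1+r^2-2rt)^{n/2}}=\Phi(r)+\frac{2r}{n-2}\,\Phi'(r)
=\sum_{l=0}^{\infty}\frac{2l+n-2}{n-2}\,C_l^{(n-2)/2}(t)\,r^l,
\end{equation*}
where the last equality follows from \eqref{n1} by differentiating the power series termwise. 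This identity is the crux of the argument.

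I would then integrate against the weight $(1-t^2)^{(n-3)/2}$ over $[-1,1]$. For $|r|<1$ the series converges uniformly in $t\in[-1,1]$ (the coefficients $C_l^{(n-2)/2}(t)$ are bounded by their values $C_l^{(n-2)/2}(1)=\binom{l+n-3}{l}$ at $t=1$, which grow only polynomially in $l$), so term-by-term integration is legitimate. Using \eqref{gege2} to write $C_l^{(n-2)/2}=\binom{l+n-3}{l}P_{l,n}$ and then the orthogonality \eqref{Rint2} in the form $\int_{-1}^{1}P_{l,n}(t)(1-t^2)^{(n-3)/2}\,dt=\frac{|S^{n-1}|}{d_l(n)|S^{n-2}|}\delta_{l0}$ (recall $P_{0,n}\equiv1$ and $d_0(n)=1$), every term with $l\geq1$ drops out and only the $l=0$ term survives; since its coefficient $\frac{2l+n-2}{n-2}$ equals $1$ at $l=0$, one is left with
\begin{equation*}
\int_{-1}^{1}\frac{1-r^2}{(1+r^2-2rt)^{n/2}}(1-t^2)^{\frac{n-3}{2}}\,dt=\frac{|S^{n-1}|}{|S^{n-2}|},
\end{equation*}
and multiplying by the prefactor $|S^{n-2}|/|S^{n-1}|$ in \eqref{n3} gives exactly $1$, proving \eqref{n4}.

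The only genuinely delicate point is the justification of the term-by-term integration, i.e. the uniform convergence of the Gegenbauer series on $[-1,1]$ for fixed $|r|<1$; everything else is bookkeeping with the coefficient $\frac{2l+n-2}{n-2}$ and the dimension $d_0(n)=1$. A purely real-variable alternative would be to reduce \eqref{n4} to a Beta integral by a substitution, but the series route above is cleaner because it reuses exactly the machinery (\eqref{n1}, \eqref{gege2} and \eqref{Rint2}) already assembled in the paper.
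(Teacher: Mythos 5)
Your proposal is correct and follows essentially the same route as the paper's proof: both expand the kernel $G_n(r,t)$ into the series $\sum_{l} d_l(n)\, r^l P_{l,n}(t)$, integrate term by term against the weight $(1-t^2)^{\frac{n-3}{2}}$ so that orthogonality \eqref{Rint2} kills every term except $l=0$, finish via the surface-area recursion \eqref{surf2}, and prove \eqref{n5} by the same pointwise estimate on the denominator for $t$ bounded away from $1$. The only difference is that you derive the kernel expansion from the generating function \eqref{n1} by differentiation and justify the termwise integration by uniform convergence, whereas the paper simply asserts the expansion; your write-up is, if anything, more complete than the published proof.
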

\begin{proof}
A direct computation shows that 

\begin{eqnarray*}
\int_{-1}^{1}G_{n} (r,t) (1-t^{2})^{\frac{n-3}{2}} dt 
&=& \frac{|S^{n-2}|}{|S^{n-1}|} \int_{-1}^{1}   
\sum _{l=0}^{\infty} d_{l}(n) r^{l} P_{l,n}(t) (1-t^{2})^{\frac{n-3}{2}} dt \\ 
&=& \frac{|S^{n-2}|}{|S^{n-1}|} \int_{-1}^{1}   
 (1-t^{2})^{\frac{n-3}{2}} dt  =1 
\end{eqnarray*}
showing \eqref{n4}. 

Besides for $n\geq 2$ we have 
\begin{eqnarray*}
\lim _{r \rightarrow 1^{-}} \Big(\frac{1-r^{2}}{(1+r^{2}-2rt)^{\tfrac{n}{2}}} \Big)
&=& \lim _{r \rightarrow 1^{-}} \Big(\frac{1-r^{2}}{[(1-r)^{2}+2r(1-t)]^{\tfrac{n}{2}}} \Big) \\
&\leq & \lim _{r \rightarrow 1^{-}} \Big(\frac{1-r^{2}}{[2r(1-t_{0})]^{\tfrac{n}{2}}} \Big)
=0 
\end{eqnarray*}
for $t \in [-1,t_{0}]$ arbitrary, showing   \eqref{n5}.
\end{proof}

Now we define the Gegenbauer filtration operator $\mathcal{G}_{n}(r)$ to be 
\begin{equation}\label{filtdef}
(\mathcal{G}_{n}(r)f)(\xi) := \frac{1}{|S^{n-2}|}\int _{S^{n-1}}G_{n}(r , \xi \cdot \eta)
f(\eta) dS_{n-1}.
\end{equation}
We immediately have the following result.
\begin{theorem}\label{main1}
The Gegebauer filtration $\mathcal{G}_{n}(r)$ is the limit of a sequence of finite linear 
combinations of the hyperspherical Legendre harmonics.  
\end{theorem}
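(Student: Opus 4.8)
The plan is to reduce the statement to the series expansion of the kernel that already surfaced while proving Lemma~\ref{filt1}. Indeed, that computation rests on the expansion
\begin{equation*}
G_n(r,t) = \frac{|S^{n-2}|}{|S^{n-1}|}\sum_{l=0}^{\infty} d_l(n)\, r^l\, P_{l,n}(t),\qquad |r|<1,\ |t|\le 1,
\end{equation*}
of the Gegenbauer kernel, the same one used to verify the normalisation \eqref{n4}. First I would substitute this series into the definition \eqref{filtdef} of $\mathcal{G}_n(r)$, obtaining the formal identity
\begin{equation*}
(\mathcal{G}_n(r)f)(\xi) = \frac{1}{|S^{n-1}|}\int_{S^{n-1}} \Bigl(\sum_{l=0}^{\infty} d_l(n)\, r^l\, P_{l,n}(\xi\cdot\eta)\Bigr) f(\eta)\, dS_{n-1}(\eta).
\end{equation*}

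The next step is to legitimise the interchange of summation and integration. The integral representation \eqref{hsi1} yields the uniform bound $|P_{l,n}(t)|\le 1$ on $[-1,1]$, because there the integrand $t+i(1-t^2)^{1/2}s$ has modulus $(t^2+(1-t^2)s^2)^{1/2}\le 1$; meanwhile the closed form of $d_l(n)$ in Theorem~\ref{addthm} shows that $d_l(n)$ grows only polynomially in $l$, of degree $n-1$. Hence $\sum_l d_l(n) r^l$ converges for every fixed $r\in(0,1)$, and by the Weierstrass M-test the kernel series converges uniformly in $t\in[-1,1]$. For $f\in L^1(S^{n-1})$ the interchange is therefore valid, and recognising each summand through Definition~\ref{defproj} I obtain the expansion
\begin{equation*}
(\mathcal{G}_n(r)f)(\xi) = \sum_{l=0}^{\infty} r^l\, (F_{l,n}f)(\xi),\qquad 0\le r<1.
\end{equation*}

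With this expansion established the conclusion is essentially structural. Each projection $F_{l,n}f$ lies in $\mathcal{Y}_{l,n}$, so the partial sum $S_N:=\sum_{l=0}^{N} r^l\, F_{l,n}f$ is a finite linear combination of hyperspherical Legendre harmonics. Estimating the tail by
\begin{equation*}
\bigl|(\mathcal{G}_n(r)f)(\xi)-S_N(\xi)\bigr|\le \frac{\|f\|_{L^1(S^{n-1})}}{|S^{n-1}|}\sum_{l=N+1}^{\infty} d_l(n)\, r^l,
\end{equation*}
whose right-hand side vanishes as $N\to\infty$ uniformly in $\xi$ because the series converges, I exhibit $\mathcal{G}_n(r)f$ as the uniform limit of the $S_N$, which is exactly the claimed assertion.

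I expect the one genuinely delicate point to be the justification of the term-by-term integration, which rests on the two analytic facts deployed above: the uniform bound $|P_{l,n}|\le 1$ and the polynomial growth of $d_l(n)$. Once these are in place the Weierstrass M-test closes the gap and the rest is bookkeeping; I would nonetheless be careful to state explicitly the topology in which the limit is taken, since the cleanest formulation---uniform convergence for each fixed $r<1$---already subsumes the $L^2$ convergence natural to the decomposition $L^2(S^{n-1})=\bigoplus_l \mathcal{Y}_{l,n}$.
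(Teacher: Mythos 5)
Your proposal is correct and follows essentially the same route as the paper's proof: expand the kernel $G_n(r,t)$ in the series $\frac{|S^{n-2}|}{|S^{n-1}|}\sum_l d_l(n)r^l P_{l,n}(t)$, integrate term by term against $f$, identify each summand as $r^l(F_{l,n}f)(\xi)$ via Definition \ref{defproj}, and conclude that $\mathcal{G}_n(r)f=\sum_l r^l F_{l,n}f$ is the limit of its partial sums. The only difference is that you supply the analytic justifications (the bound $|P_{l,n}|\le 1$ from \eqref{hsi1}, polynomial growth of $d_l(n)$, the M-test, and the explicit uniform tail estimate) that the paper's proof passes over silently.
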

\begin{proof}
Let $f \in L^{2}(S^{n-1}).$ From \eqref{filtdef},   we have 
\begin{eqnarray*}
(\mathcal{G}_{n}(r)f)(\xi) &=& \frac{1}{|S^{n-2}|}\int _{S^{n-1}}G_{n}(r , \xi \cdot \eta)
f(\eta) dS_{n-1} \\
&=& \frac{1}{|S^{n-2}|}\int _{S^{n-1}}
\Big( \int_{-1}^{1}  G_{n}(r , t) P_{l,n}(t)(1-t^{2})^{\tfrac{n-3}{2}} dt \Big)
f(\eta) dS_{n-1}\Big|_{t= \xi \cdot \eta} \\
\implies (\mathcal{G}_{n}(r)f)(\xi) &=& \frac{1}{|S^{n-2}|} 
\sum _{l=0}^{\infty} r^{l} d_{l}(n)\int _{S^{n-2}} C_{l}^{\alpha} (\xi \cdot \eta)f(\eta)
 dS_{n-1} (\eta) \\
 &=& \sum _{l=0}^{\infty}r^{l}(F_{l,n}f)(\xi )
\end{eqnarray*}
and for $|r|<1,$ 
 \begin{equation}\label{filtdef2}
(\mathcal{G}_{n}(r)f)(\xi) = \sum _{l=0}^{\infty}d_{l}(r) 
P_{l,n}(\xi \cdot \eta)r^{l}(F_{l,n}f)(\xi ) 
= (F_{l,n}f)(\xi )
\end{equation}
which completes the proof.
\end{proof}

The inner product of two complex valued functions on the surface of the hypersphere is 
\begin{equation}\label{m1}
\lka f , h \rka _{S^{n-1}} = \int _{S^{n-1}} f(\xi , \eta)h(\xi , \eta)^{\ast}dS_{n-1}(\eta)
\end{equation}
where $\ast$ denotes complex conjugation. Using the fact that the Gegenbauer filtration 
is rotation invariant over $SO(n),$ one can move $h$ to any point 
$(\xi _{0} , \eta _{0}) \in S^{n-1}.$ Then we define a generalised convolution 
as a function on the rotation group $SO(n)$ to be 
\begin{equation}\label{m2}
(h \ast f)(R) = \int _{S^{n-1}} f(\xi , \eta) h_{R}(\xi , \eta)^{\ast}dS_{n-1}(\eta)
\end{equation}
for $R \in SO(n)$ and where $h_{R}$ is $h$ rotated by $R$ defined as 
\begin{equation}\label{m3}
h_{R}(A) = h(R^{-1}A).
\end{equation}
Thus, every well-behaved function $f \in L^{2}(S^{n-1})$ admits the expansion 
\begin{equation}\label{m4}
f(\xi , \eta) = \sum _{\alpha =0}^{\infty} \sum _{l,m} \hat{f}_{l,m}(\xi , \eta ) 
C_{l}^{\alpha} (\xi , \eta ) 
\end{equation}
where the generalised Fourier transform on the surface of the hypersphere 
$\hat{f}_{l,m}$ is  defined to be 
\begin{equation}\label{m5}
\hat{f}_{l,m}(\xi)  = \int_{S^{n-1} \ni \eta \in SO(n)} f(R)\rho_{l,m}(R)dS_{n-1}(\eta).
\end{equation}
Here,  $\rho(R)$ is a function on $SO(n)$ containing fixed matrix valued functions called 
the irreducible representations of $SO(n).$ 

As a consequence of \eqref{approx3}, we observe that for a suitable $\psi$, we have  
\begin{eqnarray}\label{csq}
(F_{l,n}\psi)(\xi) &=& \frac{d_{l}(n)}{|S^{n-1}|}\int_{S^{n-1}}P_{l,n}(\xi \cdot \eta) 
\psi(\eta)dS_{n-1}(\eta) \nonumber \\
&=&  \frac{d_{l}(n)}{|S^{n-1}|}  \frac{|S^{n-1}|}{d_{l}(n)}\psi_{l}(\xi) 
= \sum _{j=1}^{d_{l}(n)}\lka \psi_{l}, \psi_{l,j} \rka _{S^{n-1}} \psi_{l,j}(\xi) \nonumber \\
\implies (F_{l,n}\psi)(\xi) &=& \sum _{j=1}^{d_{l}(n)}\hat{\psi}_{l}\psi_{l,j}(\xi).
\end{eqnarray}

Another interesting result of this work is that Gegenbauer filtration on $S^{n-1}$ 
coincides with convolution on $S^{n-1}.$ Particularly, convolution of two 
functions on the hypersphere equals the multiplication of their Fourier coefficients. 
This is the next theorem \eqref{idgm} below. 
\begin{theorem}\label{idgm}
Let $f , h \in L^{2}(S^{n-1})$  and $\mathcal{G}(r)$ the Gegenbauer filtration operator. 
Then the spectrum of the Gegenbauer kernel convolution is given by 
\begin{equation}\label{id11}
\widehat{f\ast h}(l , \alpha) =   
 \sqrt{\frac{\Gamma (l + n-2)(2l + n -2)}{l! \Gamma (n-1)}} 
\hat{f} (l , \alpha ) \hat{h}(l ,0).
\end{equation}
\end{theorem}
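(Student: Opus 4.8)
The plan is to read \eqref{id11} as a Funk--Hecke / spherical convolution theorem: convolution against the zonal kernel $h$ should act diagonally on each eigenspace $\mathcal{Y}_{l,n}$, multiplying the degree-$l$ Fourier coefficients of $f$ by the degree-$l$ zonal coefficient $\hat h(l,0)$ of $h$, with a normalization constant whose identity is the real content of the theorem. The first thing I would do is rewrite the stated prefactor, using $\Gamma(l+n-2)=(l+n-3)!$ and $\Gamma(n-1)=(n-2)!$, as
\[
\sqrt{\frac{\Gamma(l+n-2)(2l+n-2)}{l!\,\Gamma(n-1)}}
= \sqrt{\frac{(2l+n-2)(l+n-3)!}{l!\,(n-2)!}}
= \sqrt{d_l(n-1)},
\]
i.e. the square root of the dimension of the space of degree-$l$ hyperspherical harmonics on $S^{n-1}$ (for $n=3$ this is $\sqrt{2l+1}$, the familiar $S^2$ factor). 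So the goal reduces to isolating this dimensional factor and establishing the product structure $\hat f(l,\alpha)\,\hat h(l,0)$.

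First I would expand both arguments via the orthogonal decomposition $L^2(S^{n-1})=\bigoplus_{l\ge 0}\mathcal{Y}_{l,n}$, writing $f=\sum_l f_l$ and $h=\sum_l h_l$ with components given by \eqref{dec2}. By linearity of the convolution \eqref{m2} and of the Fourier transform \eqref{m5}, it then suffices to evaluate $\widehat{f_l\ast h_{l'}}$. Next I would use the rotation invariance of the kernel together with the addition theorem \eqref{gph11}: since $P_{l,n}(\xi\cdot\eta)$ is the reproducing kernel for $\mathcal{Y}_{l,n}$ and rotations preserve each $\mathcal{Y}_{l,n}$, the integral $\int_{S^{n-1}} f_l(\eta)\,(h_{l'})_R(\eta)^{\ast}\,dS_{n-1}(\eta)$ vanishes unless $l=l'$, and for $l=l'$ it collapses, via Lemma \ref{addt} and the projection identity \eqref{approx4}, to a single zonal contribution proportional to $\hat h(l,0)$. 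This is the same orthogonality mechanism already exploited in Lemma \ref{addt} and in \eqref{csq}.

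The key computation is then to push the surviving degree-$l$ expression through the transform \eqref{m5} and track the inner-product normalizations. Using \eqref{csq}, which records $(F_{l,n}\psi)(\xi)=\sum_{j=1}^{d_l(n)}\hat{\psi}_l\,\psi_{l,j}(\xi)$, and Theorem \ref{main1}, which identifies the filtration $\mathcal{G}_n(r)$ with the projection $F_{l,n}$, the convolution is realized as the diagonal operator whose eigenvalue on $\mathcal{Y}_{l,n}$ is $\hat h(l,0)$ up to the basis-normalization constant. Collecting the constants arising from \eqref{approx4}, the reproducing factor in \eqref{gph11}, and the inner-product convention \eqref{ips} then produces precisely the power $\sqrt{d_l(n-1)}$, which is \eqref{id11}.

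I expect the main obstacle to be exactly this constant bookkeeping rather than any conceptual difficulty: one must reconcile the several normalizations in play --- the $d_l(n)/|S^{n-1}|$ weight in \eqref{approx4}, the reproducing-kernel constant $d_l(n)/|S^n|$ in \eqref{gph11}, and the unnormalized-versus-$L^2$-normalized conventions implicit in \eqref{m5} and in the coefficient $\hat h(l,0)$ --- so that the dimensional factor emerges with the correct square-root power and not, say, as $d_l(n-1)$ or $1/\sqrt{d_l(n-1)}$. A secondary care point is justifying the interchange of the infinite harmonic expansion with the convolution and Fourier integrals, which is legitimate for $f,h\in L^2(S^{n-1})$ by the $L^2$-convergence supplied by the orthogonal decomposition.
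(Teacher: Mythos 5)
Your identification of the prefactor as $\sqrt{d_l(n-1)}$ is algebraically correct, and your overall plan (decompose $f$ and $h$ into harmonic components, show the convolution acts diagonally on each $\mathcal{Y}_{l,n}$, extract the zonal coefficient $\hat h(l,0)$) is the standard Funk--Hecke route. But there is a genuine gap at exactly the point you yourself flag as ``the real content of the theorem'': the normalization constant is never derived. Your key step reads that ``collecting the constants arising from \eqref{approx4}, the reproducing factor in \eqref{gph11}, and the inner-product convention \eqref{ips} then produces precisely the power $\sqrt{d_l(n-1)}$'' --- that is an assertion of the conclusion, not an argument. Every dimensional factor you cite enters linearly ($d_l(n)/|S^{n-1}|$ in \eqref{approx4}, $d_l(n)/|S^n|$ in \eqref{gph11}), so a square root can only emerge from a specific interplay, e.g.\ from the $L^2$-norm of the zonal harmonic $\eta\mapsto P_{l,n}(\xi\cdot\eta)$, which by Lemma \ref{addt} and \eqref{Rint2} is $\sqrt{|S^{n-1}|/d_l(n)}$; and which power ultimately appears ($\sqrt{d}$, $d$, or $1/\sqrt{d}$) depends entirely on the normalization adopted in the transform \eqref{m5}, which you never pin down. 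Indeed, in the classical $S^2$ convolution theorem of \cite{Heal} the factor is $2\pi\sqrt{4\pi/(2l+1)}$, so the dimension enters there with the opposite power to your claimed ``familiar'' $\sqrt{2l+1}$; this shows the constant cannot be guessed from the diagonal structure alone and must be computed from the definitions in force. Acknowledging the bookkeeping obstacle is not the same as overcoming it.

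For comparison, the paper's proof takes a different route: it unwinds $\widehat{f\ast h}(l,\alpha)$ as an integral of the $SO(n)$-convolution against the filtration kernel $\overline{\mathcal{G}_n(r)}$, invokes rotation-invariance of the Haar measure on $SO(n)$ to conclude the expression vanishes unless $\alpha=0$ (which is where the factor $\hat h(l,0)$ comes from), and then extracts the constant not from projection bookkeeping but from the relation \eqref{gege2} between $C_l^{(n-2)/2}$ and $P_{l,n}$, i.e.\ from the coefficient $\binom{l+n-3}{l}$. (The paper's own final step is also terse, but it at least names the specific identity from which the constant is meant to follow.) To close the gap in your version you would need to (i) fix the convention in \eqref{m5} explicitly, (ii) carry out the degree-$l$ computation of $\widehat{f_l\ast h_l}$ with all constants displayed, using Lemma \ref{addt} for the norms of zonal functions, and (iii) verify that what emerges is $\sqrt{d_l(n-1)}\,\hat f(l,\alpha)\,\hat h(l,0)$ rather than some other power of $d_l(n-1)$.
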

A special case of this property has been proved for the special case of convolutions 
on $S^{2}$, see e.g. \cite{Heal, TBU} and \cite{Agata}. We get a proof 
for this generalisation on $S^{n-1}.$ 
\begin{proof}
Since  $f , h \in L^{2}(S^{n-1})$  and by definition of the Gegenbauer filtration, 
\begin{eqnarray*}
\widehat{f\ast h}(l , \alpha) &=&  \int _{S^{n-1}} f\ast h(\xi)  
\overline{\mathcal{G}_{n}(r)(\xi)} d S_{n-1}(\xi )
 \nonumber \\
&=& \int _{S^{n-1}} \Big( \int _{SO(n)} f (R \eta )h (R^{-1} \xi )dR \Big) 
\overline{\mathcal{G}_{n}(r) (\xi)} d S_{n-1}(\xi ).
\end{eqnarray*}
After rearranging the integrals and using the \eqref{approx3} we get 
\[\widehat{f\ast h}(l , \alpha ) = 
\int _{SO(n)} 
\Big( \int _{S^{n-1}} h (R^{-1} \xi )
 \overline{(F_{l,n})(F_{j,n})C_{l}^{\alpha}C_{j}^{\alpha} (\xi)} dS_{n-1}( \xi )
 \Big)f (R N ) dR \]
where $N$  is the north pole. This implies that from \eqref{csq} and the addition theorem 
\eqref{addthm}, 
\[\widehat{f\ast h}(l , \alpha ) = \int _{SO(n)}  f(R N)
\Big( \int _{S^{n-1}} h (R^{-1} \xi )
\sum _{|j| \leq d_{l}(n)} d_{l}(n) C_{j}^{\alpha} (R\xi)F_{j,n}(R^{-1} \xi)dS_{n-1}(\xi)
\Big)C_{l}^{\alpha} (R^{-1}\xi)dR \]
which is $0$ unless $\alpha =0.$ 
This follows since the measure on $SO(n)$ is rotation-invariant. 

Therefore,  
\[\widehat{f\ast h}(l , \alpha )= \int _{SO(n)}  f(R N)C_{l}^{\alpha} 
(R^{-1}\xi)dR \hat{h}(l,0).\]
Finally using the relationship between $C_{l}^{\alpha}$ and the hyperspherical harmonics 
$P_{l,n}$ \eqref{gege2}, we conclude that \eqref{id11} holds. 
\end{proof}

\section{Conclusion}
We have derived formulae for Gegebauer kernel filtration of different 
functions on the unit hypersphere. Particularly, we derived a Gegebauer filtration operator 
via hyperspherical Legendre harmonics. We related the filtration to convolution on the 
hypersphere and proved that Gegenbauer filtration on $S^{n-1}$ 
coincides with convolution on $S^{n-1}.$ We showed that convolution of two 
functions on the hypersphere equals the multiplication of their Fourier coefficients.

The results obtained suggest that filtrations and convolutions on the Euclidean space 
can be successfully extended to spaces with non-Euclidean topologies such as the hypersphere 
using the example of Gegenbauer filters. The formulae and approach are readily 
extendable to other non-Euclidean geometries such as the flat torus and cylinder in higher 
dimensions. These can be done as further extension of this study.


\begin{thebibliography}{99}
\bibitem{Anto} Antoine, J. P.  and Torresani, C. B.,  \textsl{“Wavelets on the 2-sphere: A 
group theoretical approach,”}  Appl. Comput. Harmon. Anal, vol. 7, pp. 1–30, 1999.

\bibitem{KAW}  Atkinson, K. and  Han, W., \textsl{Spherical Harmonics and 
Approximations on the Unit Sphere: An Introduction,}  Springer-Verlag, Berlin, Germany; 2012.

\bibitem{Avery} Avery, J, \textsl{Hyperspherical Harmonics: Applications in Quantum Theory: 
1st edition, ser. Reidel Texts in the Mathematical Sciences,} Springer, vol. 5, 1989.

\bibitem{Agata} Bezubik A  and Strasburger A, \textsl{A New Form of the Spherical 
Expansion of Zonal Functions and Fourier Transforms of $SO(d)$-Finite Functions,} 
Symmetry, Integrability and Geometry: Methods and Applications, Vol. 2,  Paper 033, 8 pages. 2006.

\bibitem{IBV} Bogdanova, I;    Vandergheynst,P;   Antoine, J;  Jacques, L and  Morvidone, M,
\textsl{“Stereographic Wavelet Frames on the Sphere,”} Applied and
Computational Harmonic Analysis, vol. 19, no. 2, pp. 223–252, 2005.

\bibitem{TBU} Bulow, T., \textsl{Spherical diffusion for 3d surface smoothing,} 
IEEE Transactions on Pattern Analysis and Machine
Intelligence, vol. 26; 2004;  pp. 1650 – 1654, 2004.

\bibitem{RCP2}  Camporesi, R. \textsl{Harmonic Analysis and Propagators on 
Homogeneous  spaces.} Physics Reports 196. Nos 1 $\&$ 2, North-Holland; 1990; pp.1 - 34.

\bibitem{Heal}  Driscoll J. R; J. Dennis, J. and  Healy, M.,  \textsl{Computing Fourier transforms
and convolutions on the 2-sphere,} Adv. Appl. Math., vol. 15, no. 2, pp.202–250, 1994.
 
\bibitem{JOST} Jost, J., \textsl{Riemannian Geometry and geometric analysis,   
$5^{th}$   edition,}  \textit{Springer-Verlag}, Berlin Heidelberg, Germany, 2008.

\bibitem{Lee}  Lee, J. M.,  \textit{Introduction to smooth manifolds, Graduate Texts 
in Mathematics,} Springer Science and Business Media, New York, 2003.

\bibitem{MMT}  Morimoto, M., \textsl{Analytic Functionals on the sphere,  Translations 
of Mathematical Monographs \textbf{178},} American Mathematical Society, Providence, 1991.

\bibitem{OM} Omenyi, L. and Uchenna, U., \textsl{Global Analysis on Riemannian manifold,}  
The Australian  Journal of Mathematical Analysis and Applications 
\textbf{16}(02) (11), pp.1-17, 2019.

\bibitem{OM14}	Omenyi, L.,  On the second variation of the spectral 
zeta function of the Laplacian on homogeneous Riemannian manifolds:  
 Ph.D. Thesis, \textit{British Library},  2014,
[\texttt{Online: https://hdl.handle.net/2134/16167}]. 

\bibitem{STRA} Strasburger A., \textsl{A generalization of the Bochner identity, Exposition,} 
 Math., 1993, V.11, pp.153–157. 
 
\bibitem{Sze} Szekeres, P.,  \textsl{A Course in Modern Mathematical Physics : Groups,
Hilbert Space and Differential Geometry,} Cambridge University Press, 2004.

\bibitem{MWW}  Wogu, M. W., \textsl{Weyl Transforms, Heat Kernels, Green functions and 
Riemann Zeta functions on compact Lie groups. Modern Trends in Pseudo-differential
 operators,}  Operator Theory, Advances and Applications, Vol. 172, 
 Birkh$\ddot{a}$user Verlag, Basel/Switzerland, pp.67 - 85, 2006. 

\end{thebibliography}
\end{document}